\numberwithin{equation}{section}
\newtheorem{lemma}{Lemma}[section]
\newtheorem{definition}{Definition}[section]
\newtheorem{theorem}{Theorem}[section]
\newtheorem{assumption}{Assumption}
\newtheorem{proposition}{Proposition}[section]
\def\bp{\boldsymbol{p}}
\def\bx{\boldsymbol{x}}
\def\bf{\boldsymbol{f}}
\def\bX{\mathbf{X}}
\def\d{\mathrm{d}}
\def\bdiv{\mathrm{div}}
\def\divp{\mathrm{div}\boldsymbol{p}}
\begin{document}
\title{Error Analysis of Mixed Residual Methods for Elliptic Equations}
\author{Kai Gu, Peng Fang, Zhiwei Sun, Rui Du}
\address{School of Mathematical Science, Soochow University, Suzhou, P. R. China}
\email[Kai Gu]{1907404012@stu.suda.edu.cn}
\email[Peng Fang]{pengfang010809@163.com}
\email[Zhiwei Sun]{20194007008@stu.suda.edu.cn}
\email[Rui Du]{durui@suda.edu.cn}
\date{May 2023}
\keywords{MIM, DNN, Error analysis, Elliptic equations, Rademacher complexity}

\begin{abstract}
We present a rigorous theoretical analysis of the convergence rate of the deep mixed residual method (MIM) when applied to a linear elliptic equation with various types of boundary conditions. The MIM method has been proposed as a more effective numerical approximation method compared to the deep Galerkin method (DGM) and deep Ritz method (DRM) in various cases. Our analysis shows that MIM outperforms DRM and deep Galerkin method for weak solution (DGMW) in the Dirichlet case due to its ability to enforce the boundary condition. However, for the Neumann and Robin cases, MIM demonstrates similar performance to the other methods. 
Our results provides valuable insights into the strengths of MIM and its comparative performance in solving linear elliptic equations with different boundary conditions. 
\end{abstract}
\subjclass[2020]{65N12, 65N15.}
\maketitle

\section{Introduction}

Partial differential equations (PDEs) are widely used in various fields such as science, engineering, and finance \cite{beck2019machine,hutzenthaler2020overcoming} to model complex physical phenomena. Traditional numerical methods such as the finite difference method (FDM) and finite element method (FEM) have been successful in solving low-dimensional PDEs. However, when it comes to high-dimensional problems, the curse of dimensionality becomes a bottleneck, making classical methods less effective. In recent years, deep learning-based methods \cite{han2017deep} such as the deep Ritz method (DRM)
\cite{yu2018deep,han2018solving,chen2019quasi,duan2021analysis}, the deep Galerkin method (DGM) \cite{sirignano2018dgm}, the physics informed neural network method \cite{raissi2019physics}, the weak adversarial network method \cite{zang2020weak}, the deep least-squares methods \cite{CAI2020109707}, least-squares ReLU neural network (LSNN) method \cite{CAI2021110514} and the local deep learning method (LDPM) \cite{yang2021local} have shown significant potential for solving high-dimensional PDEs, offering an attractive alternative to traditional methods.

Despite the success of deep learning methods such as DGM and DRM, they still suffer from convergence efficiency issues, especially when dealing with boundary conditions. Recently, the (deep) mixed residual method (MIM) \cite{lyu2022mim} has been proposed to address these limitations. MIM is a novel approach that reformulates high-order PDEs into a first-order system. This idea is similar to the local discontinuous Galerkin method (LDGM) and the mixed finite element method (FEM). MIM also uses the residual of the first-order system in the least-squares sense as the loss function, which is closely related to the least-squares finite element method. Importantly, MIM can enforce exact boundary conditions, including mixed boundary conditions, which are difficult to deal with in previous deep learning methods \cite{CSIAM-AM-2-748}. In comparison to DGM and DRM, MIM provides better numerical approximation in most tested cases. However, rigorous analysis of the convergence rate of the MIM is still lacking, which limits our understanding of its theoretical properties and practical performance.

 The analysis of MIM was carried out in \cite{li2022priori} for the first time. This study asserts that MIM can approximate high-order derivatives more accurately than DRM, as confirmed by numerical experiments.  Here, the total error is divided into the approximation error and the quadrature error, while in this paper, it is divided into three parts. Additionally, they only talk about two-layer network case, but the depth and width of the network in our work are not limited. One of the highlights of \cite{li2022priori} is that the authors demonstrate that increasing the number of training samples is independent of network size.

In this article, we provide a rigorous theoretical analysis of the convergence rate of the MIM method applied to a linear elliptic equation with different kinds of boundary conditions, and compare its performance to existing results for DRM \cite{jiao2021error} and Deep Galerkin Method for weak solutions (DGMW) \cite{jiao_convergence_2023}. We use the same method that has been successfully applied to DRM and DGMW. To be more clear, \cite{duan2021analysis} presents a convergence rate in $H^1$ norm for DRM for Laplace equations with Dirichlet boundary condition by decomposing the total error into three parts, which are approximation error, statistical error, and the error caused by the boundary penalty. Specifically, we split the total error into approximation error, statistical error, and optimization error. The approximation error can be obtained by a recent work \cite{guhring2021approximation} and provides the information of the network that should be used. The derivation of statistical error is the main contribution of this paper, which uses Rademacher complexity to provide us the number of sampling points needed. The optimization error contains information about the iteration times and is beyond the scope of this paper. Although the main idea is similar to previous work \cite{jiao2021error}, there are some differences worth mentioning: (i) MIM uses the residual of the first-order system in the least-squares sense as the loss function, which needs an additional insensible variation condition on the coefficients of the elliptic problem to make the loss function non-degenerate. (ii) The neural network that MIM uses has a multi-dimensional output function, which makes the approximation analysis slightly different. (iii) An extra modification layer is added to the neural network to enforce exact boundary conditions, which complicates the estimates of statistical error.

\subsection{Results and Comparisons}

Let us now present our main results. Suppose that $\Omega\in \mathbb{R}^d$ is a bounded open set. Let $u_e\in H^3(\Omega)$ be the exact solution of the elliptic problem \eqref{elliptic problem}-\eqref{boundary condition}, and let $u_{\mathcal{A}}$ be the solution obtained by a random solver for MIM. Denote $\mathcal{N}{\rho}(\mathcal{D}, \mathfrak{n}_{\mathcal{D}}, B_{\theta})$ as the collection of functions implemented by a $\rho$-neural network with depth $\mathcal{D}$, a total number of nonzero weights $\mathfrak{n}_{\mathcal{D}}$, and all weights having a uniform bound of $B_{\theta}$. Setting $\rho=\frac{1}{1+e^{-x}}$ or $\frac{e^x-e^{-x}}{e^x+e^{-x}}$ as the activation function, we make the following claim:

For any $\epsilon > 0$ and $\mu \in (0,1)$, consider the parameterized function class $\mathcal{P}=\mathcal{N}_{\rho} (\mathcal{D},\mathfrak{n}_D,B_{\theta})$ with
    \begin{equation*}
    \mathcal{D} = C\log(d+1), \quad 
    \mathfrak{n}_D = C(d)\epsilon^{-d/(1-\mu)},\quad 
    B_{\theta} = C(d)\epsilon^{-(9d+8)/(2-2\mu)},
\end{equation*}
and the number of samples
\begin{equation*}\label{sampleing number}
 N = C(d,coe)
    \epsilon^{-4-\mathcal{D} [22d + 16]/(1-\mu)},
\end{equation*}
where $coe$ denotes the coefficients of the elliptic problem. If the optimization error is $\mathcal{E}_{opt} \leq \epsilon$, then
\begin{equation}
	\mathbb{E}_{\{\bX_i\}_{i=1}^N} ||u_{\mathcal{A}}-u_e||_{H^1(\Omega)} \leq C(coe)
	\epsilon .\nonumber
\end{equation}
We have obtained a convergence rate of nearly $\mathcal{O}(N^{-\frac{1}{d}})$ for the error $\epsilon$, which is comparable to the work of \cite{jiao2021error,jiao_convergence_2023}. In particular, we provide a detailed comparison of our results with those of other works in Table \ref{tab}.

\begin{table}[]
    \centering
    \begin{tabular}{c|c|c|c}
    \toprule[1.5pt]
         Method &  \makecell[c]{Boundary \\Condition } 
         & Scaling $\epsilon = $ & \makecell[c]{Regularity \\Condition }\\ 
         \hline \rule{0pt}{16pt}
         \multirow{2}{*}{DRM\cite{jiao2021error}, DGMW\cite{jiao_convergence_2023}} 
         & Neumann, Robin 
         & $\mathcal{O}\big( N^{-\frac{1}{4+\mathcal{D}(22d + 16)(1-\mu)^{-1}}}\big)$
         & \multirow{2}{*}{$u_e \in H^2(\Omega)$}  \\
         \cline{2-3} \rule{0pt}{16pt}
          & Dilichlet 
          & $\mathcal{O}\big( N^{-\frac{1}{4+\mathcal{D}(55d + 40)(1-\mu)^{-1}}}\big)$ &  \\
           \cline{1-4} \rule{0pt}{16pt}
          MIM (this paper) 
          & \makecell[c]{Neumann, Robin, \\Dilichlet}  
          & $\mathcal{O}\big( N^{-\frac{1}{4+\mathcal{D}(22d + 16)(1-\mu)^{-1}}}\big)$ 
          & $u_e \in H^3(\Omega)$ \\
          \bottomrule[1.5pt]
    \end{tabular}
    \caption{ Previous works and our result. Here the depth of the network $\mathcal{D} = C\log (d+1)$, and $\mu\in(0,1)$.}
    \label{tab}
\end{table}

Based on the comparison in Table \ref{tab}, it can be concluded that MIM has an advantage over DRM and DGMW for the Dirichlet case due to its ability to enforce the boundary condition. However, for the Neumann and Robin cases, MIM shows similar results to the other methods. While MIM has shown better numerical approximation in most of the experiments conducted, it does not seem to be observable in our analysis. This could be attributed to the lack of analysis of the optimization error, which contains information about the iteration times. Additionally, it should be noted that MIM requires more regularity for the exact solution, as the strong form of the loss function in the least-squares sense has been used.

The rest of the paper is organized as follows: In Section \ref{sec: network and MIM}, we provide some preliminaries about Neural Network and Mixed Residual Method. In Section \ref{sec: proof of main result}, we present the error decomposition and a sketch of the proof of the main results. In Section \ref{sec: derive sta error}, we provide a detailed analysis of the statistical error. Finally, we conclude and give a short discussion in Section \ref{sec: conclusion}.

\section{Neural Network and Mixed Residual Method}\label{sec: network and MIM}
\subsection{Neural Network with modified layer}
Let us first introduce some notations of neural networks that concerned in our later analysis. 

Let $\mathcal{D}\in \mathbb{N}^+$ denote the depth of a network. We can define the function $\mathbf{f}$ implemented by a neural network by
\begin{equation}\label{network}
\begin{aligned}
    &\mathbf{f}_0(\bx) = \bx, \\
    &\mathbf{f}_{\ell}(\bx) = \rho (A_{\ell}\mathbf{f}_{\ell-1}+\mathbf{b}_{\ell})\quad \mbox{for $\ell = 1,\dots,\mathcal{D}-1$}, \\
    &\mathbf{f} \triangleq \mathbf{f}_{\mathcal{D}}(\bx) = A_{\mathcal{D}}\mathbf{f}_{\mathcal{D}-1}+\mathbf{b}_{\mathcal{D}}, 
\end{aligned}
\end{equation}
where $A_{\ell}=\left(a_{ij}^{(\ell)}\right)\in \mathbb{R}^{n_{\ell}\times n_{\ell-1}}$, $\mathbf{b}_{\ell}=\left(b_i^{(\ell)}\right)\in\mathbb{R}^{n_{\ell}}$, 
$n_{\ell}$ is the width of a certain layer of the network,
and $\rho$ is the activation function. In this article, we consider 
\begin{equation}\label{activation function}
    \rho=\frac{1}{1+e^{-x}} \quad \mbox{or}\quad 
    \rho=\frac{e^x-e^{-x}}{e^x+e^{-x}}.
\end{equation}
While the network \eqref{network} is widely applied to DGM and DRM method, here we would like to introduce an extra modified layer in the framework of MIM, which can be written as
\begin{align}\label{modified layer}
    \mathbf{f}^{\mathbf{mod}}(\mathbf{x}) 
    \triangleq 
    \phi_g(\bx, \mathbf{f}(\bx)) 
    = \phi_g(\bx, \mathbf{f}_\mathcal{D}(\bx)) , 
\end{align}
here $\phi_g: \mathbb{R}^d \times \mathbb{R}^{n_{\mathcal{D}}} \rightarrow \mathbb{R}^{n_{\mathcal{D}}}$ is $C^1$ function depending on $g$ and $\Omega$, and is constructed to modify the output function for the boundary condition. Furthermore, we may assume that $n_{\mathcal{D}} = d+1$, which means that the output function is a vector-valued function of $d+1$ dimension.

Additionally, we define $\theta = \{A_{\ell}, \mathbf{b}_{\ell}\}_{\ell=1}^{\mathcal{D}}$ as the weight parameters, and use $\mathfrak{n}_{i},\ i = 1,\dots,\mathcal{D}$ to denote the number of nonzero weights of the first $i$ layers. 
Generally, the function $\mathbf{f}$ generated in \eqref{network} is called a parameterized function with respect to $\theta$, and we use the notation $\mathcal{N}_{\rho}(\mathcal{D}, \mathfrak{n}_{\mathcal{D}}, B_{\theta})$ to represent the collection of such functions that implemented by a $\rho$-neural network with depth $\mathcal{D}$, total number of nonzero weights $\mathfrak{n}_{\mathcal{D}}$, and all weights having a shared bound $B_{\theta}$.

For the analysis purpose, we conclude the following proposition for the activation function:
\begin{proposition}\label{assumption: activation function}
For the activation function $\rho$ given in \eqref{activation function}, we have that $\rho\in C^1$ satisfying the boundedness condition:
\begin{equation*}
    |\rho(x)|\le 1, \qquad  
    |\rho'(x)|\le 1,
\end{equation*}
for any $x\in \mathbb{R}$, and the Lipschitz condition:
\begin{equation*}
    |\rho(x)-\rho(y)|\le |x-y|,
    \qquad 
    |\rho'(x)-\rho'(y)|\le |x-y|,
\end{equation*}
for any $x,y\in \mathbb{R}$.
\end{proposition}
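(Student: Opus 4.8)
The plan is to verify the five assertions directly for each of the two candidate activation functions, using the explicit closed forms of their first two derivatives. Both $\rho(x) = (1+e^{-x})^{-1}$ and $\rho(x) = (e^x - e^{-x})(e^x + e^{-x})^{-1}$ are smooth on all of $\mathbb{R}$, since the denominators never vanish, so $\rho \in C^1$ is immediate and it remains only to establish the two boundedness estimates and the two Lipschitz estimates.

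First I would record the relevant identities. For the sigmoid, writing $\sigma := \rho$, one has $\sigma' = \sigma(1-\sigma)$ and $\sigma'' = \sigma(1-\sigma)(1-2\sigma)$; for the hyperbolic tangent, writing $t := \rho$, one has $\rho' = 1 - t^2$ and $\rho'' = -2t(1-t^2)$. The elementary range bounds $0 < \sigma(x) < 1$ and $-1 < t(x) < 1$ immediately yield $|\rho(x)| \le 1$ in both cases. For $|\rho'(x)| \le 1$: in the sigmoid case $\sigma(1-\sigma) = \tfrac14 - (\sigma-\tfrac12)^2 \le \tfrac14$, while in the tanh case $0 \le 1 - t^2 \le 1$. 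The first Lipschitz estimate $|\rho(x)-\rho(y)| \le |x-y|$ then follows from the mean value theorem, since $|\rho(x)-\rho(y)| = |\rho'(\xi)|\,|x-y| \le \big(\sup_{\mathbb{R}}|\rho'|\big)\,|x-y| \le |x-y|$. For the second Lipschitz estimate I would apply the mean value theorem to $\rho'$, reducing the claim to $\sup_{\mathbb{R}}|\rho''| \le 1$; here $|\sigma''| = \sigma(1-\sigma)\,|1-2\sigma| \le \tfrac14 \cdot 1 = \tfrac14$, and $|\rho''| = 2|t|(1-t^2) \le \max_{s\in[0,1]} 2s(1-s^2) = \tfrac{4}{3\sqrt{3}} < 1$, the maximum being attained at $s = 1/\sqrt{3}$.

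The argument presents no genuine obstacle; the one point requiring a little care is the bound on $\rho''$ for $\tanh$, where the crude estimate $|\rho''| = 2|t|(1-t^2) \le 2$ must be sharpened below $1$ via the one-variable optimization of $s \mapsto 2s(1-s^2)$ on $[0,1]$. One could instead reduce the $\tanh$ case to the sigmoid case through the relation $\tanh(x) = 2\sigma(2x)-1$, but this rescaling does not obviously simplify the constants, so treating the two cases in parallel as above seems cleanest.
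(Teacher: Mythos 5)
Your verification is correct and complete; the paper itself states this proposition without proof, treating it as an elementary fact, so your direct check via the closed forms $\sigma'=\sigma(1-\sigma)$, $\sigma''=\sigma(1-\sigma)(1-2\sigma)$, $\tanh'=1-\tanh^2$, $\tanh''=-2\tanh(1-\tanh^2)$ together with the mean value theorem is exactly the argument the authors are implicitly relying on. The only step that needs any care, the bound $\sup_{\mathbb{R}}|\tanh''| = \tfrac{4}{3\sqrt{3}} < 1$ obtained by maximizing $s\mapsto 2s(1-s^2)$ on $[0,1]$, is handled correctly, so there is nothing to add.
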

We also make the following assumption on modified function:
\begin{assumption}\label{assumption: modified function}
For the modified function $\phi_g$ in \eqref{modified layer}, we assume $\phi_g\in C^1$ satisfying
the boundedness condition:
\begin{equation*}
\begin{gathered}
    |\phi_g(\bx,\mathbf{f}(\bx))|
    \le 
    |\mathbf{f}(\bx)| + B_{g}, \\
    |\partial_{x_p} \phi_g(\bx,\mathbf{f}(\bx))|
    \le 
    |\partial_{x_p}\mathbf{f}(\bx)| + B_{\phi'}|\mathbf{f}(\bx)| + B_{g'},
    \end{gathered}
\end{equation*}
for any $\bx\in \mathbb{R}^d$,
and the Lipschitz condition with respect to $\mathbf{f}$:
\begin{equation*}
\begin{gathered}
    |\phi_g(\bx,\mathbf{f}(\bx)) - \phi_g(\bx,\widetilde{\mathbf{f}}(\bx))| 
    \le 
    |\mathbf{f}(\bx) - \widetilde{\mathbf{f}}(\bx)|,\\
   |\partial_{x_p}\phi_g(\bx,\mathbf{f}(\bx)) 
   - 
   \partial_{x_p}\phi_g(\bx,\widetilde{\mathbf{f}}(\bx))| 
    \le 
    |\partial_{x_p}\mathbf{f}(\bx) - \partial_{x_p}\widetilde{\mathbf{f}}(\bx)|
    + B_{\phi'}
    |\mathbf{f}(\bx) - \widetilde{\mathbf{f}}(\bx)|,
\end{gathered}
\end{equation*}
for any $\bx\in \mathbb{R}^d$. Here $B_{g}$, $B_{g'}$ depend on the boundary condition $g$, $B_{\phi'}$ depends on the area $\Omega$ and construction of $\phi_g$.
\end{assumption}
In the following, we introduce the Mixed Residual Method and provide an example of constructing a modified function $\phi_g$ for the case where $\Omega$ is a ball. We demonstrate that Assumption \ref{assumption: modified function} holds in this case.

\subsection{Mixed Residual Method}
Let $\Omega$ be a convex bounded open set in $\mathbb{R}^d$, and $\partial\Omega\in C^{\infty}$. For simplicity, we further assume that $\Omega\subset [0,1]^d$. We consider the following second order elliptic equation:
\begin{equation}\label{elliptic problem}
    -\Delta u + \omega u = f \quad  \mbox{in $\Omega$},
\end{equation}
with three kinds of boundary conditions:
\begin{equation}\label{boundary condition}
\begin{aligned}
    u & =  g \quad \mbox{on $\partial\Omega$}, \\
    \frac{\partial u}{\partial \nu} 
    & = g \quad \mbox{on $\partial\Omega$}, \\
    \alpha u + \frac{\partial u}{\partial \nu} &= g \quad \mbox{on $\partial\Omega$}, \quad \alpha > 0,
\end{aligned}
\end{equation}
namely Dirichlet, Neumann, and Robin boundary condition, respectively. Here we make the following assumption for this problem: 
\begin{assumption}\label{assumption: regularity}
Consider the elliptic problem \eqref{elliptic problem}-\eqref{boundary condition}, we assume the inhomogeneous term $f\in L^{\infty}(\Omega)\cap H^1(\Omega)$, the boundary term $g\in C^{2}(\partial\Omega)$, and coefficient $\omega\in C^2(\Omega)$. Moreover, $\omega$ has positive lower bound
    \begin{equation*}
        \omega(\bx) \ge c_{\omega}>0 \quad \mbox{for any $\bx\in \Omega$},
    \end{equation*}
   and a relatively insensible variation
    \begin{equation*}
        \|\nabla \omega\|_{L^{\infty}(\Omega)}
        < (c_{\omega})^{\frac{3}{2}}.
    \end{equation*}
\end{assumption}
Under the regularity condition and lower bound of $\omega$ in the Assumption \ref{assumption: regularity}, one can conclude that problem \eqref{elliptic problem}-\eqref{boundary condition} has a unique weak solution $u^*\in H^3(\Omega)$.
The relatively insensible variation of $\omega$ is used to make the loss function non-degenerate in the decomposition of error.
In this paper we use the notation $C(coe) =  C( f, g,\omega,\Omega)$ to denote a constant depending upon the given problem \eqref{elliptic problem}-\eqref{boundary condition}.

Now from the MIM's point of view, we can reconsider problem \eqref{elliptic problem} as a first-order system. In fact, we introduce new variable $\bp = \nabla u$, and consider $(u,\bp)$ as independent $(d+1)$-dimension variable, which can be approximated by the output of neural network $\mathcal{P}$ defined in \eqref{network}-\eqref{modified layer}, of solving a minimization problem 
\begin{equation*}
    \min_{u,\bp \in \mathcal{P}} \mathcal{L}(u,\bp),
\end{equation*}
here the loss function is defined by
\begin{equation}\label{minimization problem}
\mathcal{L}(u,\bp) := \int_{\Omega} \omega \big(\bp-\nabla u \big)^2 + \big(-\bdiv \bp + \omega u - f\big)^2 \d\bx.
\end{equation}
Note that we have no penalty terms of boundary condition in the problem \eqref{minimization problem}, since the extra modification layer \eqref{modified layer} has been constructed such that the output function automatically fit the boundary condition. Here, we would like to mention that such a vanishing of penalty terms is the most highlight point in the networks of MIM. 

While the construction of such modification layer relies on the structure of area $\Omega$, we introduction a simple example here for the case that $\Omega$ is a ball. Suppose now $\Omega = \{ \bx\in \mathbb{R}^d:\, |\bx|^2\le 1\}$, denote $\phi_g(\bx,\mathbf{f}(\bx)) = \{\phi_{g,i}\}_{i=1}^{d+1}$ and $\mathbf{f} = \{f_i\}_{i=1}^{d+1}$ in \eqref{modified layer}. Then we can set the modification for Dirichlet case:
\begin{equation*}
\left\{\begin{aligned}
    \phi_{g,1}(\bx,\mathbf{f}(\bx)) & =   (1-|\bx|)^2 f_1(\bx) + \tilde{g}(\bx) |\bx|, \\
      \phi_{g,i}(\bx,\mathbf{f}(\bx)) & = f_i(\bx),\quad i=2,\dots,d+1,
\end{aligned}\right.
\end{equation*}
for Neumann case:
\begin{equation*}
\left\{\begin{aligned}
    \phi_{g,1}(\bx,\mathbf{f}(\bx)) & =  f_1, \\
      \phi_{g,i}(\bx,\mathbf{f}(\bx)) 
      & = 
      (1-|\bx|)^2 f_i(\bx) +  \tilde{g}(\bx)x_i,\quad i=2,\dots,d+1,
\end{aligned}\right.
\end{equation*}
and for Robin case:
\begin{equation*}
\left\{\begin{aligned}
    \phi_{g,1}(\bx,\mathbf{f}(\bx)) & =  f_1, \\
      \phi_{g,i}(\bx,\mathbf{f}(\bx)) 
      & = 
      (1-|\bx|)^2 f_i(\bx) + \big(\tilde{g}(\bx)-f_1\big) x_i,\quad i=2,\dots,d+1,
\end{aligned}\right.
\end{equation*}
here $\tilde{g}(\bx)\in C^3(\Omega)$ such that $\tilde{g}(\bx) = g(\bx)$ for any $\bx\in \partial\Omega$. Following the above construction, one can check the output of network \eqref{network}-\eqref{modified layer} satisfies the boundary condition given in \eqref{boundary condition} automatically by the notations of
\begin{equation*}
    (u,\bp )= \mathbf{f}^{\mathbf{mod}} = \phi_g(\bx, \mathbf{f}(\bx)).
\end{equation*}

\section{Proof of Main Result}\label{sec: proof of main result}
In this section we give a sketch of the poof of our main result. Note that in the statistical point of view, it is equivalent to write the loss function \eqref{minimization problem} in the form of
\begin{equation}
    \mathcal{L}(u,\bp) = |\Omega| \mathbb{E}_{\bX \sim U(\Omega)} 
    \Big[\omega(\bX)\big(\bp(\bX) - \nabla u(\bX)\big)^2 
    + \big(- \bdiv \bp(\bX) + \omega(\bX) u(\bX) - f(\bX)\big)^2\Big], \nonumber
\end{equation}
where $U(\Omega)$ is the uniform distribution on $\Omega$.
As one can sample $N$ points to obtain an approximation of $L(u,\bp)$, we define its discrete version as
\begin{equation}
    \widehat{\mathcal{L}}^N(u,\bp) 
    =
     \frac{|\Omega|}{N} \sum_{k=1}^N 
    \Big[\omega(\bX_k)\big(\bp(\bX_k) - \nabla u(\bX_k)\big)^2 + \big(- \bdiv \bp(\bX_k) + \omega(\bX_k) u(\bX_k) - f(\bX_k)\big)^2\Big]. \nonumber
\end{equation}
where $\{\bX_k\}_{k=1}^N$ are i.i.d. random variables according to $U(\Omega)$.

\subsection{Error Decomposition}
Now let us set $(u_{e},\bp_{e})$ to be the exact solution of problem \eqref{elliptic problem}, thus is also the minimizer of Loss Function $\mathcal{L}(u,\bp)$. We also consider the problem 
\begin{equation*}
	\min_{u,\bp \in \mathcal{P}} \widehat{\mathcal{L}}^N(u,\bp),
\end{equation*}
and denote its solution by $(\hat{u}_{\theta},\hat{\bp}_{\theta})$. Finally, let us represent an SGD algorithm by $\mathcal{A}$, and denote its output solution by $(u_{\mathcal{A}}, \bp_{\mathcal{A}})$. Now we introduce the approximation error which denotes the difference between $(u_{e},\bp_{e})$ and its projection onto $\mathcal{P}$: 
\begin{equation*}
    \mathcal{E}_{app} = \inf_{u,\bp \in \mathcal{P}}
    \big(\|u-u_e\|_{H^1(\Omega)}^2 + \|\bp-\bp_e\|_{H^1(\Omega)}^2\big),
\end{equation*}
the statistical error between $\mathcal{L}$ and $\widehat{\mathcal{L}}^N$:
\begin{equation*}
    \mathcal{E}_{sta} = \sup_{u,\bp \in \mathcal{P}} \big[\mathcal{L}(u,\bp)-\widehat{\mathcal{L}}^N(u,\bp)\big] + \sup_{u,\bp \in \mathcal{P}}\big[\widehat{\mathcal{L}}^N(u,\bp) - \mathcal{L}(u,\bp)\big],
\end{equation*}
and the optimization error between $(\hat{u}_{\theta},\hat{\bp}_{\theta})$ and $(u_{\mathcal{A}}, \bp_{\mathcal{A}})$:
\begin{equation*}
    \mathcal{E}_{opt} = \widehat{\mathcal{L}}^N(u_{\mathcal{A}},\bp_{\mathcal{A}}) - \widehat{\mathcal{L}}^N(\hat{u}_{\theta},\hat{\bp}_{\theta}).
\end{equation*}
Then we have the following decomposition proposition:

\begin{proposition}
Under the Assumption \ref{assumption: regularity}, it holds that the total error between exact solution $(u_{e},\bp_{e})$ and output $(u_{\mathcal{A}}, \bp_{\mathcal{A}})$ can be decomposed as:
    \begin{equation}\label{error decomposition}
        \|u_{\mathcal{A}} - u_e\|_{H^1(\Omega)} +  \|\bp_{\mathcal{A}} - \bp_e\|_{L^2(\Omega)}
        \leq C(coe)\big[\mathcal{E}_{app}+\mathcal{E}_{sta}+\mathcal{E}_{opt}\big]^{1/2}.
    \end{equation}
\end{proposition}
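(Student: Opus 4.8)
\emph{Plan.} I would prove \eqref{error decomposition} in two moves: (i) a \emph{coercivity} bound showing the continuous loss $\mathcal{L}$ controls the $H^1\times L^2$ distance to $(u_e,\bp_e)$ up to a factor $C(coe)$, and (ii) the standard split of $\mathcal{L}(u_\mathcal{A},\bp_\mathcal{A})$ into statistical, optimization and approximation pieces, using that $(\hat u_\theta,\hat\bp_\theta)$ minimizes $\widehat{\mathcal{L}}^N$ over $\mathcal{P}$ and that $\mathcal{L}(u_e,\bp_e)=0$.

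\emph{Step 1 (coercivity).} Put $v=u-u_e$, $\boldsymbol{q}=\bp-\bp_e$, and $r_1:=\bp-\nabla u=\boldsymbol{q}-\nabla v$, $r_2:=-\bdiv\bp+\omega u-f=-\bdiv\boldsymbol{q}+\omega v$; the last two equalities (and $\mathcal{L}(u_e,\bp_e)=0$) follow from $\bp_e=\nabla u_e$ and $-\Delta u_e+\omega u_e-f=0$. Then $\mathcal{L}(u,\bp)=\int_\Omega\omega|r_1|^2\,\d\bx+\int_\Omega|r_2|^2\,\d\bx$, and I claim
\begin{equation*}
\|v\|_{H^1(\Omega)}^2+\|\boldsymbol{q}\|_{L^2(\Omega)}^2\le C(coe)\,\mathcal{L}(u,\bp)\qquad\text{for all }(u,\bp)\in\mathcal{P}.
\end{equation*}
Since $\omega\ge c_\omega$, one has $\|r_1\|_{L^2}^2\le c_\omega^{-1}\mathcal{L}(u,\bp)$, and because $\nabla v=\boldsymbol{q}-r_1$ it suffices to bound $\|v\|_{H^1}$. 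The network output $\mathbf{f}^{\mathbf{mod}}$ is $C^1$ and $u_e\in H^3(\Omega)$, so $\boldsymbol{q}\in H(\bdiv,\Omega)$; using $\bdiv\boldsymbol{q}=\omega v-r_2$, the Gauss--Green formula, and the test function $\psi=v\in H^1(\Omega)$,
\begin{equation*}
\int_\Omega|\nabla v|^2+\omega v^2\,\d\bx=\int_\Omega r_2\,v\,\d\bx-\int_\Omega r_1\cdot\nabla v\,\d\bx+\int_{\partial\Omega}(\boldsymbol{q}\cdot\nu)\,v\,\d S .
\end{equation*}
Since $(u,\bp)\in\mathcal{P}$ and $(u_e,\bp_e)$ both satisfy \eqref{boundary condition} — the former by construction of the modification layer \eqref{modified layer} — the boundary term is $\le0$ in every case: $v=0$ on $\partial\Omega$ (Dirichlet); $\boldsymbol{q}\cdot\nu=0$ (Neumann); $\boldsymbol{q}\cdot\nu=-\alpha v$, so it is $-\alpha\int_{\partial\Omega}v^2\,\d S\le0$ (Robin). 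Dropping it and applying $\omega\ge c_\omega$ and Cauchy--Schwarz gives $\min(1,c_\omega)\|v\|_{H^1}^2\le(\|r_1\|_{L^2}+\|r_2\|_{L^2})\|v\|_{H^1}$, hence $\|v\|_{H^1}\le C(coe)\mathcal{L}(u,\bp)^{1/2}$ and then $\|\boldsymbol{q}\|_{L^2}\le\|r_1\|_{L^2}+\|\nabla v\|_{L^2}\le C(coe)\mathcal{L}(u,\bp)^{1/2}$, proving the claim. (This is the point where the insensible-variation hypothesis $\|\nabla\omega\|_{L^\infty}<(c_\omega)^{3/2}$ of Assumption \ref{assumption: regularity} is genuinely needed: an alternative route expands $\int_\Omega|r_2|^2$ by parts and must absorb the cross term $2\int_\Omega v\,\nabla\omega\cdot\nabla v$ into $2\int_\Omega\omega|\nabla v|^2+\int_\Omega\omega^2v^2$, and positive-definiteness of the resulting quadratic form in $(\|\nabla v\|_{L^2},\|v\|_{L^2})$ requires exactly such a smallness bound on $\nabla\omega$.)

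\emph{Step 2 (split and approximation term).} By Step 1 applied to $(u_\mathcal{A},\bp_\mathcal{A})\in\mathcal{P}$, it is enough to bound $\mathcal{L}(u_\mathcal{A},\bp_\mathcal{A})$. Fix any $(u^\star,\bp^\star)\in\mathcal{P}$ and add and subtract $\widehat{\mathcal{L}}^N(u_\mathcal{A},\bp_\mathcal{A})$, $\widehat{\mathcal{L}}^N(\hat u_\theta,\hat\bp_\theta)$, $\widehat{\mathcal{L}}^N(u^\star,\bp^\star)$ and $\mathcal{L}(u^\star,\bp^\star)$. Then $\widehat{\mathcal{L}}^N(u_\mathcal{A},\bp_\mathcal{A})-\widehat{\mathcal{L}}^N(\hat u_\theta,\hat\bp_\theta)=\mathcal{E}_{opt}$; $\widehat{\mathcal{L}}^N(\hat u_\theta,\hat\bp_\theta)-\widehat{\mathcal{L}}^N(u^\star,\bp^\star)\le0$ since $(\hat u_\theta,\hat\bp_\theta)$ minimizes $\widehat{\mathcal{L}}^N$ over $\mathcal{P}$; and $[\mathcal{L}-\widehat{\mathcal{L}}^N](u_\mathcal{A},\bp_\mathcal{A})+[\widehat{\mathcal{L}}^N-\mathcal{L}](u^\star,\bp^\star)\le\mathcal{E}_{sta}$ by the definition of $\mathcal{E}_{sta}$. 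Hence $\mathcal{L}(u_\mathcal{A},\bp_\mathcal{A})\le\mathcal{E}_{sta}+\mathcal{E}_{opt}+\mathcal{L}(u^\star,\bp^\star)$. For the last term, $\bp^\star-\nabla u^\star=(\bp^\star-\bp_e)-\nabla(u^\star-u_e)$ and $-\bdiv\bp^\star+\omega u^\star-f=-\bdiv(\bp^\star-\bp_e)+\omega(u^\star-u_e)$, and $\|\bdiv(\bp^\star-\bp_e)\|_{L^2}\le\sqrt{d}\,\|\bp^\star-\bp_e\|_{H^1}$, whence $\mathcal{L}(u^\star,\bp^\star)\le C(coe)\big(\|u^\star-u_e\|_{H^1}^2+\|\bp^\star-\bp_e\|_{H^1}^2\big)$; choosing $(u^\star,\bp^\star)$ to be a near-minimizer over $\mathcal{P}$ and passing to the infimum gives $\mathcal{L}(u_\mathcal{A},\bp_\mathcal{A})\le\mathcal{E}_{sta}+\mathcal{E}_{opt}+C(coe)\mathcal{E}_{app}$. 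Combining with Step 1 and $a+b\le\sqrt{2}(a^2+b^2)^{1/2}$ yields \eqref{error decomposition}.

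\emph{Main obstacle.} The substance is Step 1. The delicate point is to arrange the integration-by-parts so that the boundary contribution is simultaneously nonpositive for the Dirichlet, Neumann and Robin conditions — this is precisely where the structure of the modification layer enters, and is the mechanism behind MIM's advantage over DRM/DGMW in the Dirichlet case (absence of any boundary-penalty term) — and then to dominate the $\nabla\omega$-interaction, which is what forces the insensible-variation assumption. One must also be careful that the strong-residual loss requires $\bdiv\bp\in L^2(\Omega)$ and the identity $\mathcal{L}(u_e,\bp_e)=0$ to be meaningful, which, together with the subsequent approximation estimate for the $\bp$-component, is why $u_e\in H^3(\Omega)$ is imposed rather than merely $H^2(\Omega)$.
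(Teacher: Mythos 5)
Your proposal is correct, and its Step 2 (the five-term telescoping of $\mathcal{L}(u_{\mathcal{A}},\bp_{\mathcal{A}})$, dropping the nonpositive term coming from the empirical minimizer, bounding $\mathcal{L}(u^\star,\bp^\star)\le C(coe)\big(\|u^\star-u_e\|_{H^1(\Omega)}^2+\|\bp^\star-\bp_e\|_{H^1(\Omega)}^2\big)$ and passing to the infimum) coincides with the paper's argument. Where you genuinely differ is the coercivity step. The paper expands $\int_{\Omega}|\bdiv \bar{\bp}-\omega\bar{u}|^2+\omega|\nabla\bar{u}-\bar{\bp}|^2\,\d\bx$ and integrates the cross term $-2\int_{\Omega}\omega\bar{u}\,\bdiv\bar{\bp}\,\d\bx$ by parts, which produces, besides the same boundary terms you treat (vanishing for Dirichlet and Neumann, of favorable sign for Robin), the volume term $2\int_{\Omega}\bar{u}\,(\nabla\omega\cdot\bar{\bp})\,\d\bx$; this is absorbed by Young's inequality, and positivity of the resulting coefficients $c_{\omega}^2-\|\nabla\omega\|_{L^\infty(\Omega)}^{4/3}$ and $c_{\omega}-\|\nabla\omega\|_{L^\infty(\Omega)}^{2/3}$ is exactly where the insensible-variation hypothesis of Assumption \ref{assumption: regularity} is consumed (see \eqref{lower bound}--\eqref{lower bound final result}). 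You instead test the identity $\bdiv\boldsymbol{q}=\omega v-r_2$ against $v$ and never differentiate $\omega$, so your coercivity bound uses only $\omega\ge c_{\omega}$; consequently your proof establishes \eqref{error decomposition} without the smallness condition on $\nabla\omega$, a mild strengthening of the proposition. For that reason your parenthetical is mis-aimed: the insensible-variation hypothesis is needed for the route you call the ``alternative'' (which is the paper's), whereas your own Step 1 nowhere invokes it. What the paper's version buys instead is a genuinely two-sided estimate, bounding the residual quadratic form itself from below by $\|\bar{u}\|_{H^1(\Omega)}^2+\|\bar{\bp}\|_{L^2(\Omega)}^2$, while you pass through the intermediate residuals $r_1,r_2$; both give the same conclusion here. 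A caveat common to both proofs, which you state and use correctly, is that the sign of the boundary term rests on the modified output \eqref{modified layer} satisfying \eqref{boundary condition} exactly, so that $v$, $\boldsymbol{q}\cdot\nu$, or $\alpha v+\boldsymbol{q}\cdot\nu$ vanish on $\partial\Omega$ in the three respective cases.
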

\begin{proof}
For any $(u, \bp)\in\mathcal{P}$, set $\bar{u}=u-u_e$, $\bar{\bp}=\bp-\bp_e$, then we have
\begin{equation}
\begin{aligned}  \nonumber
     \mathcal{L}(u,\bp) 
     = &
     \mathcal{L}(\bar{u} + u_e, \bar{\bp} + \bp_e) \\
     = &
     \mathcal{L}(u_e,\bp_e) 
     + \int_{\Omega}|\bdiv \bar{\bp} - \omega\bar{u} |^2 
     + \omega |\nabla \bar{u} - \bar{\bp} |^2 \d \bx\\
     & +
     2\int_{\Omega} \big( \bdiv \bp_e - \omega u_e + f\big) \big( \bdiv \bar{\bp} - \omega \bar{u} \big) 
     + \omega \big( \nabla u_e - \bp_e\big) \big( \nabla \bar{u} - \bar{\bp} \big) \d \bx \\
     =& 
     \mathcal{L}(u_e,\bp_e) 
     + \int_{\Omega}|\bdiv \bar{\bp} - \omega\bar{u} |^2 
     + \omega |\nabla \bar{u} - \bar{\bp} |^2 \d \bx.
\end{aligned}
\end{equation}
The last line holds due to the fact that $(u_R,\bp_R)$ is the real solution of the equation.
Thus we can conclude that
\begin{equation}\label{upper bound of difference of L}
    \begin{aligned}
        \mathcal{L}(u,\bp)-\mathcal{L}(u_e,\bp_e)
        =&
        \int_{\Omega}|\bdiv \bar{\bp} - \omega\bar{u} |^2 
     + \omega |\nabla \bar{u} - \bar{\bp} |^2 \d \bx\\
        &\leq 
        2 
        \big( \|\omega\|_{L^{\infty}(\Omega)}^2 \|\bar{u}\|_{H^1(\Omega)}^2 
        + \|\bar{\bp}\|_{H^1(\Omega)}^2 \big).
    \end{aligned}
\end{equation}
On the other hand, by using integration by parts one has
\begin{equation}\label{lower bound}
    \begin{aligned}
        \int_{\Omega}|\bdiv \bar{\bp} - \omega\bar{u} |^2 
     + \omega |\nabla \bar{u} - \bar{\bp} |^2 \d \bx
     = &
     \int_{\Omega}|\bdiv \bar{\bp} |^2  
     + | \omega\bar{u} |^2 
     + \omega |\nabla \bar{u} |^2 + \omega | \bar{\bp} |^2 \d \bx\\
     & - 2
      \int_{\partial\Omega}\omega\bar{u} (\boldsymbol{\nu} \cdot \bar{\bp}) \d \bx 
      + 2\int_{\Omega} \bar{u} (\nabla \omega \cdot \bar{\bp})\d \bx .
    \end{aligned}
\end{equation}
Here the boundary term equals $0$ for the Dirichlet and Neumann condition. As for the Robin case, we have that
\begin{equation*}
    \begin{aligned}
      - 2 \int_{\partial\Omega}\omega\bar{u} (\boldsymbol{\nu} \cdot \bar{\bp}) \d \bx 
      =
      2\alpha \int_{\partial\Omega}\omega\bar{u}^2 \d \bx \ge 0.
    \end{aligned}
\end{equation*}
As for the last term in \eqref{lower bound}, we have the estimate
\begin{equation*}
    \begin{aligned}
      2\int_{\Omega} \bar{u} (\nabla \omega \cdot \bar{\bp})\d \bx 
      \ge -
      \|\nabla \omega\|_{L^{\infty}(\Omega)}^{\frac{4}{3}} \|\bar{u}\|_{L^2(\Omega)}^2
      -
      \|\nabla \omega\|_{L^{\infty}(\Omega)}^{\frac{2}{3}}
        \|\bar{\bp}\|_{L^2(\Omega)}^2.
    \end{aligned}
\end{equation*}
Using above estimates and turn back to \eqref{lower bound}, we finally obtain
\begin{equation}\label{lower bound final result}
    \begin{aligned}
        \int_{\Omega}|\bdiv \bar{\bp} - \omega\bar{u} |^2 
     +& \omega |\nabla \bar{u} - \bar{\bp} |^2 \d \bx
     \ge 
     c_{\omega} \|\nabla\bar{u}\|_{L^2(\Omega)}^2\\
     & + 
     (c_{\omega}^2 - \|\nabla \omega\|_{L^{\infty}(\Omega)}^{\frac{4}{3}} ) \|\bar{u}\|_{L^2(\Omega)}^2
     + 
     (c_{\omega} - \|\nabla \omega\|_{L^{\infty}(\Omega)}^{\frac{2}{3}})
        \|\bar{\bp}\|_{L^2(\Omega)}^2.
    \end{aligned}
\end{equation}
Combining \eqref{upper bound of difference of L} and \eqref{lower bound final result}, and using Assumption \ref{assumption: regularity} of $\omega$, it leads to
\begin{equation*}
    \begin{aligned}
        C \big( \|u-u_e\|_{H^1(\Omega)}^2 
        +& \|\bp-\bp_e\|_{L^2(\Omega)}^2 \big)\\
        \le &
        L(u,\bp)-L(u_e,\bp_e)
        \leq 
        C
        \big( \|u-u_e\|_{H^1(\Omega)}^2 
        + \|\bp-\bp_e\|_{H^1(\Omega)}^2 \big).
    \end{aligned}
\end{equation*}

Now, let us divide the difference of Loss function as
\begin{equation}\nonumber
    \begin{aligned}
        &\mathcal{L}(u_{\mathcal{A}},\bp_{\mathcal{A}})-\mathcal{L}(u_e,\bp_e)\\
        =& 
        \mathcal{L}(u_{\mathcal{A}},\bp_{\mathcal{A}})
        -
        \widehat{\mathcal{L}}^N(u_{\mathcal{A}},\bp_{\mathcal{A}})
        +
        \widehat{\mathcal{L}}^N(u_{\mathcal{A}},\bp_{\mathcal{A}})
        -
        \widehat{\mathcal{L}}^N(\hat{u}_{\theta},\hat{\bp}_{\theta})+\widehat{L}(\hat{u}_{\theta},\hat{\bp}_{\theta})\\
        &-
        \widehat{\mathcal{L}}^N(u,\bp)
        +
        \widehat{\mathcal{L}}^N(u,\bp)
        -
        \mathcal{L}(u,\bp)+\mathcal{L}(u,\bp)-L(u_e,\bp_e)\\
        \leq & 
        \sup_{u,\bp\in\mathcal{P}}\big[\mathcal{L}(u,\bp)-\widehat{\mathcal{L}}^N(u,\bp)\big]
        +
        \big[\widehat{\mathcal{L}}(u_{\mathcal{A}},\bp_{\mathcal{A}})
        -
        \widehat{\mathcal{L}}^N(\hat{u}_{\theta},\hat{\bp}_{\theta})\big]\\
        & +
        \sup_{u,\bp\in \mathcal{P}}\big[\widehat{\mathcal{L}}^N(u,\bp)
        -
        \mathcal{L}(u,\bp)\big]
        +
        \|u-u_e\|_{H^1(\Omega)}^2 + \|\bp-\bp_e\|_{H^1(\Omega)}^2.
    \end{aligned}
\end{equation}  
Since $(u,\bp)$ can be any element in $\mathcal{P}$, we take the infimum of $(u,\bp)\in \mathcal{P}$ on right-hand sides of above estimate, and derive
\begin{equation}\nonumber
    \begin{aligned}
        \mathcal{L}(u_{\mathcal{A}},\bp_{\mathcal{A}})
        - \mathcal{L}(u_e,\bp_e)
        \leq
        \sup_{u,\bp\in\mathcal{P}}\big[\mathcal{L}(u,\bp)-\widehat{\mathcal{L}}^N(u,\bp)\big]
        +
        \sup_{u,\bp\in \mathcal{P}}\big[\widehat{\mathcal{L}}^N(u,\bp)-\mathcal{L}(u,\bp)\big]\\
        +
        \big[\widehat{\mathcal{L}}^N(u_{\mathcal{A}},\bp_{\mathcal{A}})
        -
        \widehat{\mathcal{L}}^N(\hat{u}_{\theta},\hat{\bp}_{\theta})\big]
        + 
        \inf_{u,\bp\in \mathcal{P}} \big(\|u-u_e\|_{H^1(\Omega)}^2 + \|\bp-\bp_e\|_{H^1(\Omega)}^2\big).
    \end{aligned}
\end{equation}
Combining the discussion above yields \eqref{error decomposition}.

\end{proof}

\subsection{Approximation Error}
Now we consider the approximation of the neural network in Sobolev spaces, which is comprehensively studied in \cite{guhring2021approximation} for a variety of activation functions. The key idea in \cite{guhring2021approximation} to build the upper bound in Sobolev spaces is to construct an approximate partition of unity.
Denote $\mathcal{F}_{s,p,d}\triangleq\{f\in W^{s,p}\left([0,1]^d\right): \|f\|_{W^{s,p}\left([0,1]^d\right)}\leq 1\}$. Let $\rho$ be logistic function $\frac{1}{1+e^{-x}}$ or tanh function $\frac{e^x-e^{-x}}{e^x+e^{-x}}$. We introduce the result from  Proposition 4.8, \cite{guhring2021approximation}:

\begin{theorem}
    Let $p\geq 1$, $s, k, d\in\mathbb{N}^+$, $s\geq k+1$.  For any $\epsilon >0$ and $f\in \mathcal{F}_{s,p,d}$, there exists a neural network $f_{\rho}$ with depth $C\log(d+s)$ and $C(d,s,p,k)\epsilon^{-d/(s-k-\mu k)}$ non-zero weights such that
		\begin{equation}
			\|f-f_{\rho}\|_{W^{k,p}\left([0,1]^d\right)}\leq \epsilon. \nonumber
		\end{equation}
		Moreover, the weights in the neural network are bounded in absolute value by
		\begin{equation}
			C(d,s,p,k)\epsilon^{-2-\frac{2(d/p+d+k+\mu k)+d/p+d}{s-k-\mu k}} \nonumber
		\end{equation}
		where $\mu$ is an arbitrarily small positive number.
\end{theorem}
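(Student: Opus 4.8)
The plan is to follow the constructive strategy announced in the excerpt and carried out in \cite{guhring2021approximation}: realise $f$ locally by Taylor polynomials and glue the pieces together with a neural-network partition of unity. First I would fix a uniform mesh of $[0,1]^d$ into $\sim h^{-d}$ cubes of side $h$ and, on each cube $Q_m$, replace $f$ by an averaged Taylor polynomial $p_m$ of degree $s-1$. The Bramble--Hilbert lemma gives $\|f-p_m\|_{W^{k,p}(Q_m)}\lesssim h^{s-k}\|f\|_{W^{s,p}(Q_m)}$; summing the $p$-th powers over all cubes yields a global $W^{k,p}$ error of order $h^{s-k}$, which dictates the scaling $h\sim\epsilon^{1/(s-k)}$ (the exponent is relaxed to $s-k-\mu k$ once the network emulation errors below are folded in, which is the source of the $\mu k$ loss).

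Next I would build the network components. Using the smoothness of $\rho$ (logistic or tanh), one approximates the squaring map $x\mapsto x^2$ by a fixed-depth network via finite differences of $\rho$, hence the multiplication map $(x,y)\mapsto xy$ by the polarisation identity, and hence every fixed monomial of degree $\le s-1$; crucially these emulations must be accurate in $W^{k,p}$, so one needs Sobolev-type estimates (not just sup-norm estimates) on these multiplication gadgets, with the accuracy governed by the magnitude of the weights. One then constructs a smooth approximate partition of unity $\{\psi_m\}$ subordinate to the mesh, each $\psi_m$ being a network built as a product of one-dimensional bump functions formed from differences of shifted and dilated copies of $\rho$, with $\sum_m\psi_m\approx 1$ and all derivatives up to order $k$ controlled. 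The output network is $f_\rho=\sum_m \psi_m\cdot q_m$, where $q_m$ is the network realisation of $p_m$, the $\psi_m$ and $q_m$ assembled in parallel and the final products emulated by the multiplication gadget; its depth is $O(\log(d+s))$ and its number of nonzero weights is $O(h^{-d})=O(\epsilon^{-d/(s-k-\mu k)})$, matching the claim.

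The error analysis is then a triangle inequality splitting $\|f-f_\rho\|_{W^{k,p}}$ into (i) the partition-of-unity reconstruction error $\|f-\sum_m\psi_m f\|_{W^{k,p}}$, (ii) the local polynomial error $\|\sum_m\psi_m(f-p_m)\|_{W^{k,p}}$, and (iii) the network emulation error $\|\sum_m\psi_m(p_m-q_m)\|_{W^{k,p}}$. Term (ii) is the Bramble--Hilbert estimate; terms (i) and (iii) are driven to negligibility by taking the network accuracy parameters polynomially large in $h^{-1}$, which is exactly what produces the stated weight bound $C(d,s,p,k)\,\epsilon^{-2-(2(d/p+d+k+\mu k)+d/p+d)/(s-k-\mu k)}$. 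I expect the main obstacle to be step (iii): controlling the full $W^{k,p}$ norm of the products $\psi_m q_m$ and of their deviation from $\psi_m p_m$, since differentiating a product of network components up to order $k$ produces many cross terms whose individual approximation errors and Lipschitz constants must all be tracked and balanced against the weight growth — this bookkeeping is precisely what forces the regularity gap $s\ge k+1$ and the $\mu k$ term in the exponent.
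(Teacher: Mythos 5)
Your outline is essentially the proof strategy of the cited source: the paper itself does not prove this theorem but imports it verbatim as Proposition 4.8 of \cite{guhring2021approximation}, whose argument is exactly the approximate-partition-of-unity construction you describe (uniform mesh, averaged Taylor polynomials via Bramble--Hilbert, multiplication gadgets built from the smooth activation, and $W^{k,p}$ bookkeeping of the cross terms that produces the $\mu k$ loss and the polynomial weight bound). So your proposal is consistent with, and takes the same route as, the proof behind the statement; no genuinely different approach or gap to flag beyond the fact that the detailed emulation estimates are delegated to that reference.
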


Note that the domain $\Omega$ is a subset of the unit cube $[0,1]^d$. The following extension result can be used.

\begin{lemma}
    Let $k\in\mathbb{N}^+$, $1\leq p<\infty$. There exists a linear operator $E$ from $W^{k,p}(\Omega)$ to $W_0^{k,p}\left([0,1]^d\right)$ and $Eu=u$ in $\Omega$.
\end{lemma}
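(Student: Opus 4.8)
The plan is to reduce the statement to the classical Sobolev extension theorem for smooth domains and then localize with a fixed smooth cutoff, so that the extended function has compact support inside the open cube and hence lies in $W_0^{k,p}$.

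First, since $\partial\Omega\in C^\infty$ — so that $\Omega$ is in particular a Lipschitz (indeed $C^{k}$) domain — the classical Calder\'on--Stein extension theorem furnishes a bounded linear operator $E_0:W^{k,p}(\Omega)\to W^{k,p}(\mathbb{R}^d)$ with $E_0 u=u$ almost everywhere on $\Omega$. If one prefers to avoid quoting Stein, the same operator can be built directly: cover $\partial\Omega$ by finitely many boundary charts, take a subordinate partition of unity, flatten the boundary in each chart, extend each localized piece across the flattened boundary by a Seeley--Hestenes higher-order reflection, and add the pieces back to the interior part. This is precisely where the $C^\infty$ (or at least $C^k$) regularity of $\partial\Omega$ is used, since it guarantees the reflection maps $W^{k,p}$ into $W^{k,p}$.

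Next, fix once and for all a cutoff $\eta\in C_c^\infty(\mathbb{R}^d)$ with $\eta\equiv 1$ on a neighborhood of $\overline{\Omega}$ and $\operatorname{supp}\eta\subset(0,1)^d$; such an $\eta$ exists because $\overline{\Omega}$ is a compact subset of the open cube $(0,1)^d$ (after, if necessary, an innocuous rescaling of the ambient cube, which does not affect the approximation estimates of the previous theorem). Define $E u\triangleq\eta\,E_0 u$. By the Leibniz rule, multiplication by the fixed function $\eta$ is bounded and linear on $W^{k,p}(\mathbb{R}^d)$, so $E:W^{k,p}(\Omega)\to W^{k,p}(\mathbb{R}^d)$ is bounded and linear, and $Eu=u$ on $\Omega$ because $\eta=1$ there.

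Finally, to see $Eu\in W_0^{k,p}([0,1]^d)$, note that $Eu$ has compact support inside the open set $(0,1)^d$; convolving with a mollifier of sufficiently small radius keeps the support inside $(0,1)^d$ and converges to $Eu$ in $W^{k,p}$, so $Eu$ is a $W^{k,p}$-limit of functions in $C_c^\infty((0,1)^d)$, i.e. $Eu\in W_0^{k,p}([0,1]^d)$. The only substantive ingredient is the classical extension theorem for smooth domains; the remaining work is bookkeeping, and the one point that needs care — ensuring the localization neither spoils the identity $Eu=u$ on $\Omega$ nor leaves a nonzero trace on $\partial([0,1]^d)$ — is exactly what the choice of $\eta$ arranges.
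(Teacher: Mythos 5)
Your proof is correct. The paper itself offers no proof of this lemma --- it is quoted as a classical extension result --- and your argument (Calder\'on--Stein extension for the smooth domain $\Omega$, followed by multiplication with a fixed cutoff $\eta\in C_c^\infty$ supported in the open cube and equal to $1$ near $\overline{\Omega}$, then mollification to land in $W_0^{k,p}$) is exactly the standard construction that the citation implicitly relies on, so there is nothing to fault in the mathematics. The one point worth emphasizing is the one you already flag: the cutoff exists only if $\overline{\Omega}$ is compactly contained in the open cube $(0,1)^d$, whereas the paper only assumes $\Omega\subset[0,1]^d$; if $\overline{\Omega}$ touched $\partial([0,1]^d)$ on a set seen by the trace operator, the statement would in fact fail (e.g.\ for $u\equiv 1$, since elements of $W_0^{k,p}([0,1]^d)$ have vanishing trace), so the rescaling you invoke is not merely cosmetic but is what makes the lemma true as stated; it is harmless for the subsequent approximation argument, as you note.
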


From above results, we can conclude by choosing $p=2$, $k=1$ and $s=2$ that
\begin{lemma}\label{lemma: approximation}
    Suppose $\mathbf{f}\in H^2(\Omega)$ is a vector valued function of $(d+1)$ dimension and satisfies $\|\mathbf{f}\|_{H^2(\Omega)}\leq 1$. For any $\epsilon > 0$,
    set the parameterized function class $\mathcal{P}=\mathcal{N}_{\rho} (\mathcal{D},\mathfrak{n}_D,B_{\theta})$ of network \eqref{network}-\eqref{modified layer} as
    \begin{equation*}
    \mathcal{D} = C\log(d+1), \quad 
    \mathfrak{n}_D = C(d)\epsilon^{-d/(1-\mu)},\quad 
    B_{\theta} = C(d)\epsilon^{-(9d+8)/(2-2\mu)},
\end{equation*}
with $\mu \in (0,1)$, then there exists a neural network $\mathbf{f}_{\rho}\in \mathcal{P}$, 
such that
		\begin{equation}
			\|\mathbf{f}-\mathbf{f}_{\rho}\|_{H^1(\Omega)}\leq \epsilon. \nonumber
		\end{equation}
\end{lemma}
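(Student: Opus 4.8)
The plan is to reduce the vector-valued, modified-layer statement of Lemma~\ref{lemma: approximation} to the scalar, unmodified approximation result of the Theorem quoted from \cite{guhring2021approximation}, and then to track how the modification layer $\phi_g$ affects the final error. First I would extend $\mathbf{f}\in H^2(\Omega)$ componentwise to a function $E\mathbf{f}\in W_0^{2,2}([0,1]^d)$ using the extension Lemma, noting that $\|E\mathbf{f}\|_{H^2([0,1]^d)}\le C\|\mathbf{f}\|_{H^2(\Omega)}\le C$; after rescaling by the (dimension-dependent) norm of $E$, each of the $d+1$ components lies in $\mathcal{F}_{2,2,d}$ up to a constant. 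Applying the Theorem with $p=2$, $k=1$, $s=2$ to each component (with target accuracy $\epsilon/(C\sqrt{d+1})$ so that the Euclidean sum over components still gives $\epsilon$), I obtain scalar networks with depth $C\log(d+2)=C\log(d+1)$ and $C(d)\epsilon^{-d/(2-1-\mu)}=C(d)\epsilon^{-d/(1-\mu)}$ nonzero weights, with weight bound $C(d)\epsilon^{-2-(2(d/2+d+1+\mu)+d/2+d)/(1-\mu)}$, which simplifies (absorbing lower-order terms and the $\mu$ in numerators into constants / the stated exponent) to $C(d)\epsilon^{-(9d+8)/(2-2\mu)}$. Stacking the $d+1$ scalar subnetworks in parallel into a single network of the form \eqref{network} multiplies the width and weight count by at most a factor $d+1$, absorbed into $C(d)$, and does not change the depth; this yields a network $\mathbf{f}$ in the pre-modification architecture with $\|E\mathbf{f}-\mathbf{f}\|_{H^1([0,1]^d)}\le \epsilon$, hence $\|\mathbf{f}-\mathbf{f}\|_{H^1(\Omega)}\le\epsilon$ after restriction.

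The remaining, and slightly delicate, point is that $\mathcal{P}=\mathcal{N}_\rho(\mathcal{D},\mathfrak{n}_D,B_\theta)$ in this paper denotes functions of the network \eqref{network}–\eqref{modified layer}, i.e. with the modification layer $\phi_g$ applied; so the object we actually want is $\mathbf{f}_\rho^{\mathbf{mod}}=\phi_g(\bx,\mathbf{f}_\rho(\bx))$, and I must argue that $\|\mathbf{f}-\mathbf{f}_\rho^{\mathbf{mod}}\|_{H^1(\Omega)}$ is still controlled. The natural way is to apply the raw approximation above not to the target $\mathbf{f}$ itself but to a pre-image $\mathbf{h}$ under $\phi_g$: since in the ball example $\phi_g$ is affine in the $\mathbf{f}$-slot with coefficients in $C^1(\overline\Omega)$ (e.g. the factor $(1-|\bx|)^2$ and the terms $\tilde g\,x_i$), and more generally under Assumption~\ref{assumption: modified function} $\phi_g$ is $1$-Lipschitz in $\mathbf{f}$ and its $\partial_{x_p}$-derivative is Lipschitz in $(\mathbf{f},\partial_{x_p}\mathbf{f})$ with a first-order correction of size $B_{\phi'}|\mathbf{f}-\widetilde{\mathbf{f}}|$, one gets
\begin{equation*}
\|\phi_g(\cdot,\mathbf{h})-\phi_g(\cdot,\mathbf{h}_\rho)\|_{H^1(\Omega)}
\le
C(1+B_{\phi'})\,\|\mathbf{h}-\mathbf{h}_\rho\|_{H^1(\Omega)}.
\end{equation*}
So if $\mathbf{h}$ is a sufficiently regular ($H^2$) pre-image of the target satisfying $\phi_g(\cdot,\mathbf{h})=\mathbf{f}$, approximating $\mathbf{h}$ to accuracy $\epsilon/(C(1+B_{\phi'}))$ and pushing through $\phi_g$ gives the claim, and rescaling $\epsilon$ by this extra constant only changes $C(d)$ (and the boundary/domain-dependent constants hidden in it).

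The main obstacle I anticipate is precisely this bookkeeping around the modification layer: one must (i) identify, for the boundary-condition data $g$ and domain $\Omega$ at hand, an $H^2$ pre-image $\mathbf{h}$ whose norm is bounded by the problem data so that the normalization step is legitimate — in the ball example this is straightforward since $\phi_g$ is invertible in $\mathbf{f}$ with smooth coefficients, but in general it requires an assumption on the structure of $\phi_g$; and (ii) verify that composing a network $\mathbf{f}_\rho$ satisfying \eqref{network} with the fixed $C^1$ map $\phi_g$ keeps us inside the declared class $\mathcal{P}=\mathcal{N}_\rho(\mathcal{D},\mathfrak{n}_D,B_\theta)$ — here I would simply take the stated convention that $\mathcal{P}$ already includes the modification layer, so that no weights of $\phi_g$ are counted. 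Everything else — the extension lemma, the per-component application of \cite{guhring2021approximation}, the parallel-stacking of subnetworks, and the exponent arithmetic turning $-2-\frac{2(d/p+d+k+\mu k)+d/p+d}{s-k-\mu k}$ with $p=2,k=1,s=2$ into $-(9d+8)/(2-2\mu)$ — is routine, and I would present it tersely, flagging only that the $\mu k$ correction terms in the exponents are absorbed by slightly enlarging the constant and using $\mu<1$.
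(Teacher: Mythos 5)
Your core reduction --- componentwise extension of the $(d+1)$ components, application of the cited theorem with $p=2$, $k=1$, $s=2$ to each component at accuracy $\epsilon/C\sqrt{d+1}$, parallel stacking, and the parameter bookkeeping --- is exactly the paper's (essentially one-line) proof of Lemma \ref{lemma: approximation}, and your arithmetic is correct; in fact no ``absorption'' is needed, since with $p=2,k=1,s=2$ one has the exact identity
\begin{equation*}
-2-\frac{2(d/2+d+1+\mu)+3d/2}{1-\mu}
=-\frac{(2-2\mu)+9d/2+2+2\mu}{1-\mu}
=-\frac{9d+8}{2-2\mu},
\end{equation*}
the $\mu$-terms cancelling identically.

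The genuine gap is in the extra step you add to handle the modification layer. Your argument needs an $H^2$ pre-image $\mathbf{h}$ with $\phi_g(\cdot,\mathbf{h})=\mathbf{f}$ and norm controlled by the data, and you call the ball example ``straightforward since $\phi_g$ is invertible in $\mathbf{f}$ with smooth coefficients.'' It is not: inverting the Dirichlet layer requires dividing by $(1-|\bx|)^2$, which degenerates on $\partial\Omega$. Concretely, $h_1=(f_1-\tilde g(\bx)|\bx|)/(1-|\bx|)^2$, and even when the target has the correct trace the numerator generically vanishes only to first order in $\mathrm{dist}(\bx,\partial\Omega)$, so $h_1\sim \mathrm{dist}(\bx,\partial\Omega)^{-1}$ near the boundary and is typically not even in $L^2(\Omega)$, let alone in $H^2$ with norm at most one; the same degeneracy appears in the $\bp$-components for the Neumann and Robin layers. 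Hence the ``approximate a pre-image and push through $\phi_g$'' scheme cannot be run as stated without an additional compatibility hypothesis (the target minus the boundary lift vanishing to second order) or a different corrector construction; alternatively one must read $\mathcal{P}$ in the lemma as the unmodified class \eqref{network}, which is what the paper implicitly does, since its proof never touches $\phi_g$ --- you correctly sensed this gap in the paper, but your fix does not close it. Indeed, with the modified class the statement cannot hold for arbitrary $\mathbf{f}$ with $\|\mathbf{f}\|_{H^2(\Omega)}\le 1$: in the Dirichlet case every element of $\mathcal{P}$ has first-component trace $g$ on $\partial\Omega$, so any target with a different trace is bounded away from $\mathcal{P}$ in $H^1(\Omega)$.
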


\subsection{Statistical Error and Convergence Rate}
For the statistical error, We have the following conclusion:
\begin{theorem}\label{thm: Statistical Error}
    Let the parameterized function class $\mathcal{P}=\mathcal{N}_{\rho} (\mathcal{D},\mathfrak{n}_D,B_{\theta})$ of network \eqref{network}-\eqref{modified layer} satisfying Assumption \ref{assumption: modified function}, then we have
		\begin{equation*}
        \mathbb{E}_{\left\{\bX_i\right\}_{i=1}^N} \sup _{u,\bp \in \mathcal{P}} \pm \big[\mathcal{L}(u,\bp) - \widehat{\mathcal{L}}^N(u,\bp)\big] 
        \leq 
        C(\operatorname{coe}) \frac{d \sqrt{\mathcal{D}} \mathfrak{n}_{\mathcal{D}}^{2 \mathcal{D}} B_\theta^{2 \mathcal{D}}}{\sqrt{N}} \sqrt{\log \left(d \mathcal{D} \mathfrak{n}_{\mathcal{D}} B_\theta N\right)}.
    \end{equation*}
\end{theorem}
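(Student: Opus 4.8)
# Proof Proposal for Theorem 3.2 (Statistical Error)

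The plan is to control the supremum of the empirical process via the standard symmetrization-plus-Rademacher-complexity machinery, and then to bound the Rademacher complexity of the relevant function class by a Dudley/contraction argument that peels off the network layer by layer. First I would write the loss integrand as $h_{u,\bp}(\bX) = \omega(\bX)\,|\bp(\bX)-\nabla u(\bX)|^2 + |{-\bdiv\bp(\bX)} + \omega(\bX)u(\bX) - f(\bX)|^2$, so that $\mathcal{L}(u,\bp) - \widehat{\mathcal{L}}^N(u,\bp) = |\Omega|\big(\mathbb{E}[h_{u,\bp}] - \tfrac1N\sum_k h_{u,\bp}(\bX_k)\big)$. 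By the classical symmetrization lemma,
\begin{equation*}
\mathbb{E}_{\{\bX_i\}}\sup_{u,\bp\in\mathcal{P}} \pm\big[\mathcal{L}(u,\bp)-\widehat{\mathcal{L}}^N(u,\bp)\big] \le 2|\Omega|\,\mathbb{E}_{\{\bX_i\},\{\sigma_i\}} \sup_{u,\bp\in\mathcal{P}} \frac{1}{N}\sum_{k=1}^N \sigma_k\, h_{u,\bp}(\bX_k),
\end{equation*}
where $\sigma_k$ are i.i.d. Rademacher variables; this reduces the problem to bounding $\mathfrak{R}_N(\mathcal{H})$ for $\mathcal{H} = \{h_{u,\bp} : (u,\bp)\in\mathcal{P}\}$.

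Next I would establish a uniform bound $\|h_{u,\bp}\|_{L^\infty} \le M$ over the class, which requires bounding $u$, $\nabla u$, $\bp$, and $\bdiv\bp$ pointwise. Here the modified layer \eqref{modified layer} enters: using Assumption \ref{assumption: modified function} (the boundedness conditions for $\phi_g$ and $\partial_{x_p}\phi_g$) together with Proposition \ref{assumption: activation function} ($|\rho|\le 1$, $|\rho'|\le 1$) and an induction over the $\mathcal{D}$ layers of \eqref{network}, one gets bounds of the form $\|\mathbf{f}^{\mathbf{mod}}\|_{L^\infty}, \|\partial_{x_p}\mathbf{f}^{\mathbf{mod}}\|_{L^\infty} \lesssim \mathfrak{n}_{\mathcal{D}}^{\mathcal{D}} B_\theta^{\mathcal{D}}$ (the derivative bound picks up an extra factor per layer from the chain rule, since $\partial_{x_p}\mathbf{f}_\ell = \rho'(\cdot)\odot A_\ell \partial_{x_p}\mathbf{f}_{\ell-1}$, and each $\|A_\ell\|$ contributes $\le \mathfrak{n}_{\mathcal{D}} B_\theta$). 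Squaring and absorbing $\|\omega\|_{L^\infty}$, $\|f\|_{L^\infty}$ into $C(\mathrm{coe})$ yields $M \le C(\mathrm{coe})\,\mathfrak{n}_{\mathcal{D}}^{2\mathcal{D}} B_\theta^{2\mathcal{D}}$, which matches the target numerator. Then I would decompose $h_{u,\bp}$ into its four product-type summands and use the Rademacher contraction/product lemmas: for a product $fg$ with $\|f\|_\infty,\|g\|_\infty \le M^{1/2}$ one has $\mathfrak{R}_N(\{fg\}) \lesssim M^{1/2}\big(\mathfrak{R}_N(\{f\}) + \mathfrak{R}_N(\{g\})\big)$, reducing everything to the Rademacher complexities of the component classes $\{u\}$, $\{\partial_{x_p} u\}$, $\{p_j\}$, $\{\partial_{x_j} p_j\}$ — i.e. to functions (and their first derivatives) realized by the network \eqref{network}-\eqref{modified layer}.

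The core estimate is then a Dudley entropy-integral bound on these component classes. I would bound the $L^\infty$-covering number of the network class: given two parameter sets $\theta, \widetilde\theta$ with $\|\theta - \widetilde\theta\|_\infty \le \delta$, a layer-by-layer Lipschitz estimate (again invoking the $1$-Lipschitz property of $\rho,\rho'$ from Proposition \ref{assumption: activation function} and the Lipschitz-in-$\mathbf{f}$ bounds for $\phi_g$ from Assumption \ref{assumption: modified function}) shows $\|\mathbf{f}^{\mathbf{mod}}_\theta - \mathbf{f}^{\mathbf{mod}}_{\widetilde\theta}\|_{W^{1,\infty}} \le C\,\mathcal{D}\,(\mathfrak{n}_{\mathcal{D}} B_\theta)^{\mathcal{D}}\,\delta$; since there are $\mathfrak{n}_{\mathcal{D}}$ free parameters each in $[-B_\theta, B_\theta]$, the covering number at scale $\varepsilon$ is $\big(C\mathcal{D}(\mathfrak{n}_{\mathcal{D}} B_\theta)^{\mathcal{D}} B_\theta \mathfrak{n}_{\mathcal{D}}/\varepsilon\big)^{\mathfrak{n}_{\mathcal{D}}}$, so $\log\mathcal{N}(\varepsilon) \lesssim \mathfrak{n}_{\mathcal{D}}\log(d\mathcal{D}\mathfrak{n}_{\mathcal{D}} B_\theta / \varepsilon)$. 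Feeding this into Dudley's integral $\mathfrak{R}_N(\mathcal{H}) \lesssim \inf_{\alpha>0}\big(\alpha + \tfrac{1}{\sqrt N}\int_\alpha^{M} \sqrt{\log\mathcal{N}(\varepsilon)}\,d\varepsilon\big)$ with diameter $\lesssim M = C(\mathrm{coe})\mathfrak{n}_{\mathcal{D}}^{2\mathcal{D}} B_\theta^{2\mathcal{D}}$, choosing $\alpha \sim 1/\sqrt N$, and tracking the $\sqrt{\mathfrak{n}_{\mathcal{D}}}$ and $\log$ factors, gives exactly the claimed bound $C(\mathrm{coe})\, d\sqrt{\mathcal{D}}\,\mathfrak{n}_{\mathcal{D}}^{2\mathcal{D}} B_\theta^{2\mathcal{D}} N^{-1/2}\sqrt{\log(d\mathcal{D}\mathfrak{n}_{\mathcal{D}} B_\theta N)}$ (the factor $d$ coming from summing over the $d$ coordinate directions in $\nabla u$ and $\bdiv\bp$, and the $\sqrt{\mathcal{D}}$ being absorbed from $\sqrt{\log\mathcal{N}}\sim\sqrt{\mathfrak{n}_{\mathcal{D}}\mathcal{D}\log(\cdots)}$ versus $\sqrt{\mathfrak{n}_{\mathcal{D}}}$ — this bookkeeping has to be done carefully).

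\textbf{Main obstacle.} The delicate point is propagating bounds through the \emph{derivatives} of the network across the extra modified layer. Unlike the plain network \eqref{network}, here $\partial_{x_p}\mathbf{f}^{\mathbf{mod}} = \partial_{x_p}\phi_g(\bx,\mathbf{f}) + \partial_{\mathbf{f}}\phi_g(\bx,\mathbf{f})\,\partial_{x_p}\mathbf{f}$, and Assumption \ref{assumption: modified function} only gives a \emph{mixed} bound ($|\partial_{x_p}\mathbf{f}| + B_{\phi'}|\mathbf{f}| + B_{g'}$ for magnitude, and an analogous Lipschitz bound). Ensuring these mixed terms do not blow up the induction — in particular that the perturbation estimate stays $\lesssim\mathcal{D}(\mathfrak{n}_{\mathcal{D}} B_\theta)^{\mathcal{D}}\delta$ rather than acquiring worse constants — and checking that the $B_{\phi'}, B_g, B_{g'}$ contributions are genuinely absorbable into $C(\mathrm{coe})$, is where the real work lies; this is presumably carried out in detail in Section \ref{sec: derive sta error}.
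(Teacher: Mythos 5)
Your proposal is correct in substance and rests on the same machinery as the paper: symmetrization to a Rademacher complexity, a Dudley-type entropy bound driven by covering the parameter set $\{\theta:\|\theta\|_\infty\le B_\theta\}$, and the real work being the layer-by-layer $L^\infty$ and Lipschitz-in-$\theta$ estimates for the network and its first derivatives, with the modified layer absorbed through Assumption \ref{assumption: modified function} exactly as you anticipate in your ``main obstacle'' paragraph. The route differs in the middle step. The paper first expands the squared residuals into eight terms $\Gamma_1,\dots,\Gamma_8$, strips the fixed coefficients via the property $\mathfrak{R}_N(\omega\cdot\mathcal{F})\le\|\omega\|_{L^\infty}\mathfrak{R}_N(\mathcal{F})$, and then applies the entropy bound \emph{directly to the product classes} $\mathcal{F}_k$ (e.g.\ $\{|\bp|^2\}$, $\{\divp\, u\}$), whose uniform bounds $B_k$ and parameter-Lipschitz constants $L_k$ are computed from the component estimates by the product rule (Lemmas \ref{bound and Lipschitz of f}--\ref{lemma: bound and L-constant}); no contraction or product inequality for Rademacher complexities is ever invoked. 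You instead keep the integrand whole, pull out a uniform bound $M$, and reduce to the component classes $\{u\},\{\partial_{x_p}u\},\{p_j\},\{\partial_{x_j}p_j\}$ via a product/contraction lemma (which is legitimate for uniformly bounded classes, e.g.\ via $fg=\tfrac14[(f+g)^2-(f-g)^2]$ and Talagrand contraction, and the per-sample multiplication by $\omega(\bX_i)$, $f(\bX_i)$ also fits the index-dependent contraction); this buys a cleaner reduction at the cost of an extra lemma and an extra factor $M^{1/2}$ that you must (and do, up to loose exponents) check still fits inside $d\sqrt{\mathcal{D}}\,\mathfrak{n}_{\mathcal{D}}^{2\mathcal{D}}B_\theta^{2\mathcal{D}}$. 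One quantitative slip: your claimed $W^{1,\infty}$ parameter-Lipschitz constant $C\mathcal{D}(\mathfrak{n}_{\mathcal{D}}B_\theta)^{\mathcal{D}}$ is too optimistic for the derivative; the chain rule forces a term of order $B_\theta^{2\mathcal{D}-1}\big(\prod_j n_j\big)^2$ (the paper's Lemma \ref{Lipschitz of f'}), since the $\theta$-perturbation of $\rho'(\cdot)$ is itself multiplied by the derivative bound. This is harmless for the final statement because the Lipschitz constant enters only inside the logarithm, but the bookkeeping you flagged should be done with the larger constant.
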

The proof of Theorem \ref{thm: Statistical Error} is the main contribution of this paper, which will be presented in detail in Section \ref{sec: derive sta error}. Combining Lemma \ref{lemma: approximation} and Theorem \ref{thm: Statistical Error}, we can obtain our main result on the convergence rate:

\begin{theorem}
Given any $\epsilon > 0$,
    set the parameterized function class $\mathcal{P}=\mathcal{N}_{\rho} (\mathcal{D},\mathfrak{n}_D,B_{\theta})$ of network \eqref{network}-\eqref{modified layer} as
    \begin{equation}\label{parameterized function class}
    \mathcal{D} = C\log(d+1), \quad 
    \mathfrak{n}_D = C(d)\epsilon^{-d/(1-\mu)},\quad 
    B_{\theta} = C(d)\epsilon^{-(9d+8)/(2-2\mu)},
\end{equation}
with $\mu \in (0,1)$, and the number of samples
\begin{equation}\label{sampleing number}
    N = C(d,coe)
    \epsilon^{-4-\mathcal{D} [22d + 16]/(1-\mu)}.
\end{equation}
If Assumption \ref{assumption: modified function}-\ref{assumption: regularity} hold and the optimization error $\mathcal{E}_{opt} \leq \epsilon$, then
\begin{equation}
	\mathbb{E}_{\{\bX_i\}_{i=1}^N} ||u_{\mathcal{A}}-u_e||_{H^1(\Omega)} \leq C(coe)
	\epsilon .\nonumber
\end{equation}
\end{theorem}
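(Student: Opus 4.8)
The plan is to assemble the three ingredients already prepared: the error decomposition \eqref{error decomposition}, the approximation estimate (Lemma \ref{lemma: approximation}), and the statistical error bound (Theorem \ref{thm: Statistical Error}). By \eqref{error decomposition} it suffices to show that each of $\mathcal{E}_{app}$, $\mathcal{E}_{sta}$, and $\mathcal{E}_{opt}$ is $O(\epsilon^2)$ (up to constants depending on the coefficients), since then the right-hand side of \eqref{error decomposition} is $C(coe)\,\epsilon$ and it dominates $\|u_{\mathcal{A}}-u_e\|_{H^1(\Omega)}$. The optimization error is assumed to satisfy $\mathcal{E}_{opt}\le\epsilon$, which is already of the right order (in fact better), so nothing is needed there.

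\emph{Approximation error.} First I would apply Lemma \ref{lemma: approximation} to the normalized vector-valued function $(u_e,\bp_e)/\|(u_e,\bp_e)\|_{H^2(\Omega)}\in H^2(\Omega)$, which is legitimate since $u_e\in H^3(\Omega)$ forces $\bp_e=\nabla u_e\in H^2(\Omega)$ by Assumption \ref{assumption: regularity}; rescaling by the $H^2$-norm (a constant $C(coe)$) and replacing $\epsilon$ by $\epsilon/C(coe)$ in the network-size prescription \eqref{parameterized function class} gives a network output $(u_\rho,\bp_\rho)\in\mathcal{P}$ with $\|u_\rho-u_e\|_{H^1}^2+\|\bp_\rho-\bp_e\|_{H^1}^2\le C(coe)\,\epsilon^2$, hence $\mathcal{E}_{app}\le C(coe)\,\epsilon^2$. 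One technical point to verify here is that the modified layer $\phi_g$ does not destroy this approximation: since $(u_e,\bp_e)$ already satisfies the boundary condition and $\phi_g$ is the identity on functions satisfying it (as in the ball example), applying $\phi_g$ to both $\mathbf{f}_\rho$ and the target is compatible with the Lipschitz bounds in Assumption \ref{assumption: modified function}, so the $H^1$ error is preserved up to a constant.

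\emph{Statistical error.} Next I would substitute the prescribed values \eqref{parameterized function class} of $\mathcal{D}=C\log(d+1)$, $\mathfrak{n}_{\mathcal{D}}=C(d)\epsilon^{-d/(1-\mu)}$, $B_\theta=C(d)\epsilon^{-(9d+8)/(2-2\mu)}$ into the bound of Theorem \ref{thm: Statistical Error}. The dominant factor $\mathfrak{n}_{\mathcal{D}}^{2\mathcal{D}}B_\theta^{2\mathcal{D}}$ becomes $C(d)\,\epsilon^{-2\mathcal{D}[d + (9d+8)/2]/(1-\mu)} = C(d)\,\epsilon^{-\mathcal{D}(11d+8)/(1-\mu)}$; squaring everything and tracking the $1/\sqrt N$ and logarithmic factors, one checks that choosing $N = C(d,coe)\,\epsilon^{-4-\mathcal{D}[22d+16]/(1-\mu)}$ exactly as in \eqref{sampleing number} makes $\mathbb{E}\,\mathcal{E}_{sta}\le C(coe)\,\epsilon^2$ (the extra $\epsilon^{-4}$ in $N$ absorbs the $\sqrt{\log(\cdots)}$ and the leftover $\epsilon^{-2}$ needed to upgrade $\epsilon$ to $\epsilon^2$; the exponent $22d+16 = 2(11d+8)$ is precisely twice the exponent appearing after squaring $\mathfrak{n}_{\mathcal{D}}^{2\mathcal{D}}B_\theta^{2\mathcal{D}}$). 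Taking expectation over $\{\bX_i\}$ throughout and combining, we get $\mathbb{E}_{\{\bX_i\}}\big[\|u_{\mathcal{A}}-u_e\|_{H^1(\Omega)}+\|\bp_{\mathcal{A}}-\bp_e\|_{L^2(\Omega)}\big] \le C(coe)\,\mathbb{E}\big[\mathcal{E}_{app}+\mathcal{E}_{sta}+\mathcal{E}_{opt}\big]^{1/2}\le C(coe)(\epsilon^2+\epsilon^2+\epsilon)^{1/2}\le C(coe)\,\epsilon$, using Jensen's inequality to move the expectation inside the square root, and then dropping the nonnegative $\bp$-term.

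\emph{Main obstacle.} The genuinely delicate step is matching the exponent of $N$ in \eqref{sampleing number} to the statistical bound: one must carefully square the product $d\sqrt{\mathcal{D}}\,\mathfrak{n}_{\mathcal{D}}^{2\mathcal{D}}B_\theta^{2\mathcal{D}}$, check that $2[d+(9d+8)/2] = 11d+8$ so that the combined exponent in $\epsilon$ is $2\mathcal{D}(11d+8)/(1-\mu) = \mathcal{D}(22d+16)/(1-\mu)$, and confirm that the $\sqrt{\log(d\mathcal{D}\mathfrak{n}_{\mathcal{D}}B_\theta N)}$ factor — which is only polylogarithmic in $1/\epsilon$ — is harmlessly absorbed by the spare $\epsilon^{-4}$ in the sample count, independently of how the constants $C(d,coe)$ are chosen. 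Everything else (Jensen, rescaling by the $H^2$-norm of the exact solution, discarding the $\bp$ contribution, the identity action of $\phi_g$ on admissible functions) is routine bookkeeping.
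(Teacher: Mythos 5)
Your route is essentially the paper's: normalize the exact solution so that Lemma \ref{lemma: approximation} applies and conclude $\mathcal{E}_{app}\le C\epsilon^2$; substitute \eqref{parameterized function class} into Theorem \ref{thm: Statistical Error}, where indeed $\mathfrak{n}_{\mathcal{D}}^{2\mathcal{D}}B_\theta^{2\mathcal{D}}=C\epsilon^{-\mathcal{D}(11d+8)/(1-\mu)}$, so the choice $N=C(d,coe)\epsilon^{-4-\mathcal{D}(22d+16)/(1-\mu)}$ in \eqref{sampleing number} gives $\mathcal{E}_{sta}\lesssim\epsilon^2$ up to the polylogarithmic factor (which the paper likewise sweeps away); then combine through the decomposition \eqref{error decomposition}. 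Two of your side remarks are actually sharper than the source: normalizing by the $H^2$-norm is what Lemma \ref{lemma: approximation} really requires (the paper divides by $H^1$-norms), and the compatibility of the modified layer $\phi_g$ with the approximation step is a point the paper's proof passes over in silence.

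The step that fails as written is your handling of the optimization error. Because of the square root in \eqref{error decomposition}, all three bracketed terms must be $O(\epsilon^2)$ to reach the stated $C(coe)\epsilon$; the hypothesis only gives $\mathcal{E}_{opt}\le\epsilon$, and your assertion that this is ``of the right order (in fact better)'' is backwards, since $\epsilon\ge\epsilon^2$ for small $\epsilon$. Consequently your final chain $(\epsilon^2+\epsilon^2+\epsilon)^{1/2}\le C\epsilon$ is false: the left-hand side is of order $\sqrt{\epsilon}$, so the argument as written only yields $\mathbb{E}\|u_{\mathcal{A}}-u_e\|_{H^1(\Omega)}\le C\sqrt{\epsilon}$. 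To obtain $C\epsilon$ one needs $\mathcal{E}_{opt}\lesssim\epsilon^2$ (or a rescaling of $\epsilon$ at the end). In fairness, the paper's own proof never re-inserts $\mathcal{E}_{opt}$ at all --- it bounds $\mathcal{E}_{app}$ and $\mathcal{E}_{sta}$ and stops --- so the gap is inherited from the theorem's formulation; but since you spelled the step out explicitly, it must be corrected rather than glossed.
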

\begin{proof}
    First let us normalize the solution:
    \begin{equation}\label{normalization}
        \begin{aligned}
            &\inf_{u,\bp \in\mathcal{P}}
            \Big\{\|u - u_{e}\|_{H^1(\Omega)} 
            + \|\bp - \bp_e\|_{L^2(\Omega)}\Big\}\\
            \le & 
            C \inf_{u,\bp \in\mathcal{P}}
            \Bigg\{\Big\|\frac{u}{\|u_e\|_{H^1(\Omega)}} - \frac{u_e}{\|u_e\|_{H^1(\Omega)}} \Big\|_{H^1(\Omega)} 
            +
            \Big\|\frac{\bp}{\|\bp_e\|_{H^1(\Omega)}} - \frac{\bp_e}{\|\bp_e\|_{H^1(\Omega)}} \Big\|_{L^2(\Omega)} 
            \Bigg\}\\
            =& 
            C \inf_{u,\bp \in\mathcal{P}}
            \Bigg\{\Big\|u - \frac{u_e}{\|u_e\|_{H^1(\Omega)}} \Big\|_{H^1(\Omega)} 
            +
            \Big\|\bp - \frac{\bp_e}{\|\bp_e\|_{H^1(\Omega)}} \Big\|_{L^2(\Omega)} 
            \Bigg\}.
        \end{aligned}
    \end{equation}
    Let the parameterized function class $\mathcal{P}$ be given in \eqref{parameterized function class}. It follows from the discussion in Section \ref{sec: network and MIM} that $u_e,\bp_e \in H^2(\Omega)$, thus one can apply Lemma \ref{lemma: approximation} to deduce there exists a neural network function $(u^*,\bp^*)\in\mathcal{P}$,
    such that
    \begin{equation}
        \Big\|u^* - \frac{u_e}{\|u_e\|_{H^1(\Omega)}} \Big\|_{H^1(\Omega)} 
            +
            \Big\|\bp^* - \frac{\bp_e}{\|\bp_e\|_{H^1(\Omega)}} \Big\|_{L^2(\Omega)}
            \leq \epsilon.\nonumber
    \end{equation}
    Therefore it follows from \eqref{normalization} that,
    \begin{equation}
        \mathcal{E}_{app}
        =
        \inf_{u,\bp \in \mathcal{P}}
    \big(\|u-u_e\|_{H^1(\Omega)}^2 + \|\bp-\bp_e\|_{H^1(\Omega)}^2\big)
    \le C\epsilon^2.
    \end{equation}
    On the other hand, by apply Theorem \ref{thm: Statistical Error} with $\mathcal{D}$, $\mathfrak{n}_{\mathcal{D}}$, $B_{\theta}$ and $N$ given in \eqref{parameterized function class}-\eqref{sampleing number}, one can obtain $\mathcal{E}_{sta}\leq C(d,coe)\epsilon^2$.
    Thus the Theorem is proved.
\end{proof}

\section{Derivation of Statistical Error}\label{sec: derive sta error}
In this section, we investigate statistical error $\mathcal{E}_{sta}$, which is the paramount part in this article. 
Revisit the definition
\begin{equation*}
    \mathcal{E}_{sta} = \sup \limits_{u,\bp \in \mathcal{P}}  \pm \left [ \mathcal{L}(u,\bp) - \widehat{\mathcal{L}}^N(u,\bp) \right ].
\end{equation*}
For the sake of simplicity, let us denote the density of loss function by $\Gamma_k$, $k=1,\dots,8$, which are given by
\begin{equation*}
    \begin{aligned}  
    &\Gamma_1(u,\bp, \bx) 
    = |\bp(\bx)|^2, 
    &\Gamma_2(u,\bp,\bx) 
    =& -2  \bp(\bx) \cdot \nabla u(\bx),   \\   
    &\Gamma_3(u,\bp,\bx) 
    =  |\nabla u(\bx)|^2, 
    &\Gamma_4(u,\bp,\bx) 
    =& (\divp(\bx))^2,   \\ 
    &\Gamma_5(u,\bp,\bx) 
    = - 2\divp(\bx)u(\bx)\omega(\bx), 
    &\Gamma_6(u,\bp,\bx) 
    =& 2  \divp(\bx)f(\bx), \\ 
    &\Gamma_7(u,\bp,\bx) 
    =  \omega^2(\bx) u^2(\bx), 
    &\Gamma_8(u,\bp,\bx) 
    =& -2 u(\bx) \omega(\bx) f(\bx). 
\end{aligned}
\end{equation*}
And denote the expectation and $N$-particle average of $\Gamma_k$ by $\mathcal{L}_k$ and $\widehat{\mathcal{L}}_k^N$ for $k=1,\dots,8$, separately, which are defined by
\begin{equation}\label{define Lk}
    \mathcal{L}_k(u,\bp) 
    = |\Omega| E_{\bX\sim U(\Omega)} \Gamma_k(u,\bp, \bX),
    \qquad
    \widehat{\mathcal{L}}_k^N(u,p)
    =
    \frac{|\Omega|}{N} \sum_{i=1}^N \Gamma_k(u,\bp, \bX_i),
\end{equation}
here $\{\bX_i\}_{i=1}^N$ are N i.i.d random variable yielding the uniform distribution on $\Omega$.
Then it follows directly from the triangle inequality that
\begin{equation}\label{split of statistical error}
\begin{aligned}
    E_{\{\bX_i\}_{i=1}^N} \sup_{u,\bp \in \mathcal{P}}  \pm& \left [ \mathcal{L}(u,\bp) - \widehat{\mathcal{L}}^N(u,\bp) \right ] \\
    \leq& \sum_{k=1}^8 E_{\{\bX_i\}_{i=1}^N} \sup_{u,\bp \in \mathcal{P}}  \pm \left [ \mathcal{L}_k(u,\bp) - \widehat{\mathcal{L}}_k^N(u,\bp) \right ].
    \end{aligned}
\end{equation}

\subsection{Rademacher complexity}
We aim to bound the difference between continuous loss $\mathcal{L}_k$ and empirical loss $\widehat{\mathcal{L}}_k^N$ by Rademacher complexity. First, let's introduce the definition of Rademacher complexity.

\begin{definition}
The Rademacher complexity of function class $\mathcal{F}$ associate with random sample $\{\bX_i\}_{i=1}^N$ is defined as 
\begin{equation}
    \mathfrak{R}_N(\mathcal{F}) = \mathbb{E}_{\{\bX_i,\sigma_i\}_{k=1}^N} \left[ \sup_{u \in \mathcal{F}} \frac{1}{N} \sum_{i=1}^N \sigma_i u(\bX_i) \right],
\end{equation}
where $\{\sigma_i\}_{i=1}^N$ are N i.i.d Rademacher variable with $\mathbb{P}(\sigma_k=1) = \mathbb{P}(\sigma_k=-1)=\frac{1}{2}$. 
\end{definition}
The definition of Rademacher complexity we introduced here is the generalization for the finite set case, see \cite{jiao2021error}. By the help of  above notation, we can bound the right-hand side of \eqref{split of statistical error} by the following Lemma.





\begin{lemma}\label{lemma: function class Fk}
For $\mathcal{L}_k(u,\bp)$ and $\widehat{\mathcal{L}}_k^N(u,p)$ given in \eqref{define Lk}, we have
\begin{equation*}
			\begin{aligned}
				&\mathbb{E}_{\{\bX_i\}_{i=1}^N} \sup_{u,\bp \in \mathcal{P}} \pm \left[ \mathcal{L}_k(u,\bp) - \widehat{\mathcal{L}}_k^N(u,\bp) \right] 
                \leq C(coe) \mathfrak{R}_N( \mathcal{F}_k),
			\end{aligned}
		\end{equation*}
for $k=1,\dots,8,$ here the function class $\mathcal{F}_k$ is defined by
\begin{equation*}
    \begin{aligned}
    \mathcal{F}_1 
    =& \{ |\bp(\bx)|^2 : \bp \in \mathcal{P}\},
    &\mathcal{F}_2 
    =& \{ \bp(\bx) \nabla u(\bx) : u,\bp \in \mathcal{P}\},\\
     \mathcal{F}_3 
    =& \{ |\nabla u(\bx)|^2 : u \in \mathcal{P}\}, 
    &\mathcal{F}_4 
    =& \{ |\divp(\bx)|^2 : \bp \in \mathcal{P}\}, \\
    \mathcal{F}_5 
    =& \{\divp(\bx) u(\bx) : u, \bp \in \mathcal{P}\},
    & \mathcal{F}_6 
    =&  \{ \divp(\bx) : \bp \in \mathcal{P}\}, \\
    \mathcal{F}_7 
    =& \{ u^2(\bx) : u \in \mathcal{P}\},
    & \mathcal{F}_8 
    =& \{ u(\bx) : u \in \mathcal{P}\},
    \end{aligned}
\end{equation*}
\end{lemma}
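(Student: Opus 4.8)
The plan is to pass from the supremum of the centered empirical process to the Rademacher complexity of each $\mathcal{F}_k$ by the standard symmetrization argument, after first reducing each $\mathcal{L}_k - \widehat{\mathcal{L}}_k^N$ to a form $\mathbb{E}[\Psi] - \frac1N\sum_i\Psi(\bX_i)$ for a function $\Psi$ lying in (a bounded rescaling of) $\mathcal{F}_k$. First I would note that every $\Gamma_k$ factors as $c_k(\bx)\,\psi_k(u,\bp,\bx)$ where $\psi_k$ is exactly the generator of $\mathcal{F}_k$ (e.g.\ $\Gamma_5 = -2\omega(\bx)\cdot(\divp\,u)$ with $\psi_5 = \divp\cdot u \in \mathcal{F}_5$), and the multiplier $c_k \in \{1,-2,-2\omega,2f,\omega^2,-2\omega f\}$ is, by Assumption \ref{assumption: regularity}, bounded in $L^\infty(\Omega)$ by a constant $C(coe)$. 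Pulling this bounded deterministic factor out is legitimate at the level of the empirical process because $\|c_k\|_{L^\infty}$ controls it uniformly; more carefully, for the contraction step one writes $\Gamma_k(u,\bp,\bx) = c_k(\bx)\psi_k(u,\bp,\bx)$ and absorbs $|\Omega|$ and $\|c_k\|_{L^\infty}$ into $C(coe)$.

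Next I would apply the textbook symmetrization inequality: for any class $\mathcal{G}$ of integrable functions,
\begin{equation*}
\mathbb{E}_{\{\bX_i\}} \sup_{\Psi\in\mathcal{G}} \Big(\mathbb{E}\Psi - \tfrac1N\sum_{i=1}^N \Psi(\bX_i)\Big) \le 2\,\mathfrak{R}_N(\mathcal{G}),
\end{equation*}
and the same bound for the other sign, using that $\mathfrak{R}_N(\mathcal{G}) = \mathfrak{R}_N(-\mathcal{G})$. Applying this with $\mathcal{G} = \{ |\Omega|\,c_k \cdot \psi_k : u,\bp\in\mathcal{P}\}$ and then using $\mathfrak{R}_N(a\cdot\mathcal{G}') \le |a|\,\mathfrak{R}_N(\mathcal{G}')$ for a scalar, together with the fact that multiplying by a fixed bounded function $c_k(\bx)$ only rescales the Rademacher complexity by $\|c_k\|_{L^\infty}$ (this is the one slightly non-routine point — it follows from $\sigma_i c_k(\bX_i)$ having the same distribution properties used in the contraction lemma, or alternatively by a conditioning argument treating $c_k(\bX_i)$ as fixed multipliers and invoking the Ledoux--Talagrand contraction principle with the $1$-Lipschitz maps $t\mapsto \|c_k\|_{L^\infty}^{-1}c_k(\bX_i)\,t$), yields
\begin{equation*}
\mathbb{E}_{\{\bX_i\}} \sup_{u,\bp\in\mathcal{P}} \pm\big[\mathcal{L}_k(u,\bp) - \widehat{\mathcal{L}}_k^N(u,\bp)\big] \le 2|\Omega|\,\|c_k\|_{L^\infty(\Omega)}\,\mathfrak{R}_N(\mathcal{F}_k) \le C(coe)\,\mathfrak{R}_N(\mathcal{F}_k),
\end{equation*}
which is the claim. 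For $k\in\{2,5\}$ one must also observe that $\mathcal{F}_2,\mathcal{F}_5$ are defined with the bilinear generators $\bp\nabla u$, $\divp\,u$ rather than the squared/affine ones, so no further reduction is needed there; for $k=8$ the generator is simply $u$, matching $\Gamma_8 = -2\omega f\cdot u$.

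The main obstacle I anticipate is the handling of the deterministic multipliers $c_k(\bx)$ inside the Rademacher complexity: one cannot just "pull out $\|c_k\|_{L^\infty}$" naively because $c_k$ depends on $\bx$ and hence on the random sample point, so the clean statement $\mathfrak{R}_N(c_k\cdot\mathcal{F}_k')\le\|c_k\|_{L^\infty}\mathfrak{R}_N(\mathcal{F}_k')$ needs the contraction principle applied conditionally on $\{\bX_i\}$, with the per-sample Lipschitz constants $|c_k(\bX_i)|\le\|c_k\|_{L^\infty}$. Everything else — the symmetrization inequality, the triangle-inequality split already recorded in \eqref{split of statistical error}, and the identification of generators — is routine. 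A secondary point worth a sentence is that the functions in $\mathcal{F}_k$ are uniformly bounded (by Proposition \ref{assumption: activation function} and Assumption \ref{assumption: modified function}, the networks and their derivatives are bounded on $\Omega$), which guarantees integrability and legitimizes symmetrization; the explicit bounds are deferred to the later Rademacher-complexity estimates.
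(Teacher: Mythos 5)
Your proposal is correct and takes essentially the same route as the paper: symmetrization via a ghost sample/Rademacher variables to bound the centered empirical process by $2|\Omega|\,\mathfrak{R}_N(c_k\cdot\mathcal{F}_k)$, followed by pulling the bounded deterministic multiplier out of the complexity. The step you flag as delicate is exactly the property $\mathfrak{R}_N(\omega\cdot\mathcal{F})\le\|\omega\|_{L^\infty(\Omega)}\mathfrak{R}_N(\mathcal{F})$ that the paper invokes by citing Lemma 5.2 of \cite{jiao2021error}, so no new argument is needed there.
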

 
\begin{proof}
In order to prove the Lemma, we recall the important property of Rademacher complexity: given $\omega\in L^\infty(\Omega)$,
then for any function class $\mathcal{F}$, there holds (see Lemma 5.2 in \cite{jiao2021error})
    \begin{equation}\label{property of RC}
        \mathfrak{R}_N(\omega \cdot \mathcal{F}) \leq \Vert \omega \Vert_{L^\infty(\Omega)} \mathfrak{R}_N( \mathcal{F}),
    \end{equation}
    where $\omega \cdot \mathcal{F} := \{\bar{u}:\bar{u} = \omega(\bx)u(\bx),\,u \in \mathcal{F}\} $.

Now let us just prove one case. By taking $\{\widetilde{\bX}_i\}_{i=1}^N$ as an independent copy of $\{\bX_i\}_{i=1}^N$, we have
\begin{equation}
    \begin{aligned}
    \mathcal{L}_7(u,\bp) - \widehat{\mathcal{L}}_7^N(u,\bp) 
    &= 
    |\Omega| \mathbb{E}_{\bX \sim U(\Omega)}\omega^2(\bX) u^2(\bX) 
    - 
    |\Omega|\frac{1}{N} \sum_{k=1}^N \omega^2(\bX_i) u^2(\bX_i)  \\
    &=
    \frac{|\Omega|}{N} \mathbb{E} _{\{\widetilde{\bX}_i \} _{i=1}^N } \sum_{i=1}^N \left[\omega^2(\widetilde{\bX}_i) u^2(\widetilde{\bX}_i)
    - \omega^2(\bX_i) u^2(\bX_i)\right],
    \end{aligned}\nonumber
\end{equation}
Thus we can calculate that
\begin{equation}
    \begin{aligned}
    &\mathbb{E} _{\{\bX_i \} _{i=1}^N } \sup_{u,\bp \in \mathcal{P}} | \mathcal{L}_7(u,\bp) - \widehat{\mathcal{L}}_7^N(u,\bp)| \\
    \leq & 
    \frac{|\Omega|}{N} \mathbb{E} _{\{\bX_i, \widetilde{\bX}_i \} _{i=1}^N } \sup_{u,\bp \in \mathcal{P}} \sum_{i=1}^N 
    \left[\omega^2(\widetilde{\bX}_i) u^2(\widetilde{\bX}_i)
    - \omega^2(\bX_i) u^2(\bX_i)\right]   \\
    =& 
    \frac{|\Omega|}{N} \mathbb{E} _{\{\bX_i, \widetilde{\bX}_i,\sigma_i \} _{i=1}^N } \sup_{u,\bp \in \mathcal{P}} \sum_{i=1}^N \sigma_i 
    \left[\omega^2(\widetilde{\bX}_i) u^2(\widetilde{\bX}_i)
    - \omega^2(\bX_i) u^2(\bX_i)\right],
    \end{aligned}\nonumber
\end{equation}
here the last step is due to the fact that the insertion of Rademacher variables doesn’t change the distribution since the symmetry structure in the summation.
By splitting the supremum into two terms, it continues to have
\begin{equation}
    \begin{aligned}
    \le  &
    \frac{|\Omega|}{N} \mathbb{E} _{\{\widetilde{\bX}_i,\sigma_i \} _{i=1}^N } 
    \sup_{u,\bp \in \mathcal{P}}
    \sum_{i=1}^N \sigma_i \omega^2(\widetilde{\bX}_i) u^2(\widetilde{\bX}_i)\\
    &+ 
    \frac{|\Omega|}{N} \mathbb{E} _{\{\bX_i,\sigma_i \} _{i=1}^N } 
    \sup_{u,\bp \in \mathcal{P}}
    \sum_{i=1}^N -\sigma_i \omega^2(\bX_i) u^2(\bX_i)^2\\
    =& 
    \frac{2|\Omega|}{N} \mathbb{E} _{\{\bX_i,\sigma_i \} _{i=1}^N } 
    \sup_{u,\bp \in \mathcal{P}}
    \sum_{i=1}^N \sigma_i \omega^2(\bX_i) u^2(\bX_i)^2 \\
    = &
    2|\Omega| \mathfrak{R}_N (\omega^2\cdot \mathcal{F}_7)
    \le C(\Omega, coe) \mathfrak{R}_N ( \mathcal{F}_7).
    \end{aligned}
\end{equation}
Here we used the inequality \eqref{property of RC} in the last step.
\end{proof}





The following Lemma states that the Rademacher complexity can be upper bounded by the Lipschitz constant of the parameter space.

\begin{lemma}\label{lemma: bound in terms of eps-cover}
    Let $\mathcal{F}$ be a parameterized class of functions: $\mathcal{F}=\{f(\bx;\theta):\theta \in \Theta \}$. Let
		$||\cdot||_{\Theta}$ be a norm on $\Theta$ and let $||\cdot||_{\mathcal{F}}$ be a norm on $\mathcal{F}$. 
  Suppose that $\mathcal{F}$ temains in a bounded set in $L^\infty(\Omega)$, and the
  mapping $\theta \mapsto f(x;\theta)$ is L-Lipschitz, that is,
		\begin{equation}
  \| f(*;\theta) \|_{L^\infty} \leq B,\qquad
			||f(x;\theta)-f(x;\tilde{\theta})||_{\mathcal{F}} \le L||\theta - \tilde{\theta}||_{\Theta},\nonumber
		\end{equation}
  here the upper bound $B$ is independent of $\theta$ and constant $L$ is independent of $\bx$.
  Then there holds
  \begin{equation}
       \mathfrak{R}_N (\mathcal{F}) \leq \inf_{0 < \delta < B/2} \left(4\delta + \frac{12}{\sqrt{N}} \int_{\delta}^{B/2} \Big\{\log \Big(\frac{2B\sqrt{d}}{\epsilon} \Big)^d \Big\}^{\frac{1}{2}}  d\epsilon \right).
   \end{equation}
\end{lemma}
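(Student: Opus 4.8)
The goal is to bound $\mathfrak{R}_N(\mathcal{F})$ when $\mathcal{F}$ is a parametric class that is bounded in $L^\infty$ and Lipschitz in the parameter. The plan is to combine the classical Dudley entropy integral bound for Rademacher complexity with a covering-number estimate for $\mathcal{F}$ obtained by transporting a covering of the parameter set $\Theta$ through the Lipschitz map $\theta \mapsto f(\cdot;\theta)$.

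\textbf{Step 1: Dudley's entropy integral.} Recall the chaining bound: for a function class $\mathcal{G}$ with $\sup_{g\in\mathcal{G}}\|g\|_{L^\infty}\le B$ (so that in particular $\|g\|_{L^\infty}\le B$ controls the empirical $L^2$ radius), one has
\begin{equation*}
\mathfrak{R}_N(\mathcal{G}) \le \inf_{0<\delta<B/2}\left(4\delta + \frac{12}{\sqrt{N}}\int_{\delta}^{B/2} \sqrt{\log \mathcal{N}\big(\mathcal{G},\varepsilon,\|\cdot\|_{L^\infty}\big)}\, \d\varepsilon\right),
\end{equation*}
where $\mathcal{N}(\mathcal{G},\varepsilon,\|\cdot\|_{L^\infty})$ denotes the covering number of $\mathcal{G}$ at scale $\varepsilon$ in the $L^\infty$-norm (this is the standard statement, e.g. from \cite{jiao2021error}; the $L^\infty$-cover dominates the empirical-$L^2$-cover used in the usual chaining argument, so it suffices). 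Here I would want $\|\cdot\|_{\mathcal{F}}$ in the Lipschitz hypothesis to be precisely the $L^\infty$-norm (or at least comparable to it), so that a parameter cover pushes forward to an $L^\infty$-cover of $\mathcal{F}$.

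\textbf{Step 2: Covering the parameter set.} Since $\mathcal{F}$ remains in a bounded set in $L^\infty$ and $B$ is the uniform bound, the relevant parameters $\theta$ may be taken to lie in a bounded subset of $\Theta$; identifying $\Theta$ with (a bounded subset of) $\mathbb{R}^d$ equipped with the norm $\|\cdot\|_\Theta$, a ball of radius $R$ in $\mathbb{R}^d$ admits an $r$-cover of cardinality at most $(2R\sqrt{d}/r)^d$ — this is the volumetric bound producing the $\{\log(2B\sqrt d/\varepsilon)^d\}^{1/2}$ integrand in the statement, after matching the effective radius $R$ to $B$ and the cover scale $r$ to $\varepsilon/L$. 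By the $L$-Lipschitz property, if $\{\theta_j\}$ is an $(\varepsilon/L)$-cover of $\Theta$ in $\|\cdot\|_\Theta$, then $\{f(\cdot;\theta_j)\}$ is an $\varepsilon$-cover of $\mathcal{F}$ in $\|\cdot\|_{\mathcal{F}}=\|\cdot\|_{L^\infty}$. Hence $\log\mathcal{N}(\mathcal{F},\varepsilon,\|\cdot\|_{L^\infty}) \le d\log(2B\sqrt d/\varepsilon)$ (absorbing the $L$-dependence into the constant $B$-scaled radius, which is how the clean form in the Lemma arises).

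\textbf{Step 3: Assemble.} Substituting the covering-number bound from Step 2 into the Dudley integral of Step 1 gives exactly
\begin{equation*}
\mathfrak{R}_N(\mathcal{F}) \le \inf_{0<\delta<B/2}\left(4\delta + \frac{12}{\sqrt N}\int_\delta^{B/2}\Big\{\log\Big(\tfrac{2B\sqrt d}{\varepsilon}\Big)^d\Big\}^{1/2}\d\varepsilon\right),
\end{equation*}
which is the claimed inequality. The main obstacle — or rather the point requiring care — is bookkeeping the relationship between the parameter-space radius, the Lipschitz constant $L$, and the cover scale so that the final integrand is expressed purely in terms of $B$, $d$ and $\varepsilon$; one needs the domain of relevant parameters to be contained in a ball whose radius is controlled by $B/L$ (or to simply absorb $L$ and the radius into the constant inside the logarithm, which is what the stated form does). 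A secondary technical point is justifying that the $L^\infty$-covering number legitimately replaces the empirical-measure $L^2$-covering number in Dudley's bound; this follows since $\|g-g'\|_{L^2(\text{empirical})}\le\|g-g'\|_{L^\infty}$, so an $L^\infty$-cover is a fortiori an empirical $L^2$-cover, and the $B/2$ upper limit of integration is valid because the empirical $L^2$-diameter of $\mathcal{F}$ is at most $2B$.
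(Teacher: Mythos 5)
Your argument is essentially the paper's own route: the paper simply defers to Lemmas 5.5--5.8 of the cited DRM analysis \cite{jiao2021error}, which are exactly your Steps 1--2 --- the Dudley entropy-integral bound for $\mathfrak{R}_N$ combined with an $\epsilon$-cover of the bounded parameter set pushed forward through the $L$-Lipschitz map $\theta\mapsto f(\cdot;\theta)$ to cover $\mathcal{F}$ in $L^\infty$. The bookkeeping caveat you flag (that $L$ and the parameter radius must be absorbed into the constant inside the logarithm for the stated form to hold) is precisely how the lemma is actually invoked later in the paper, where the integrand appears with $2B_\theta L_i\sqrt{\mathfrak{n}_{\mathcal{D}}}/\epsilon$ rather than $2B\sqrt{d}/\epsilon$, so your proof matches the intended statement and application.
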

\begin{proof}
    One can find the proof of the Lemma in \cite{jiao2021error}, Lemma 5.5-5.8, which relies on the concept of $\epsilon$-covering set.
\end{proof}



\subsection{Upper Bound and Lipschitz Constant}
In the following we will derive the upper bound and Lipschitz constant of the parameterized function class.

\begin{lemma}\label{bound and Lipschitz of f}
Set the parameterized function class $\mathcal{P}=\mathcal{N}_{\rho}(\mathcal{D},\mathfrak{n}_\mathcal{D},B_{\theta})$ of network \eqref{network}-\eqref{modified layer}. Under Assumption \ref{assumption: modified function}, for any $\mathbf{f}^{\mathbf{mod}}\in \mathcal{P}$ we have 
\begin{equation*}
    |f_i^{\mathbf{mod}}| \le B_{\theta} + B_{g},
\end{equation*}
and
\begin{equation*}
\begin{aligned}
    & \left|f_i^{\mathbf{mod}}(\bx;\theta)-f_i^{\mathbf{mod}}(\bx;\tilde{\theta})\right| \leq \sqrt{\mathfrak{n}_\mathcal{D}}B_{\theta}^{\mathcal{D}-1} \Bigg( \prod_{j=1}^{\mathcal{D}-1} n_j \Bigg ) \big\|\theta-\tilde{\theta}\big\|_2.
\end{aligned}    
\end{equation*}
\end{lemma}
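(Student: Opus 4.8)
My plan is to reduce both estimates to the corresponding ones for the \emph{plain} network output $\mathbf{f}=\mathbf{f}_{\mathcal D}$ of \eqref{network}, exploiting that Assumption \ref{assumption: modified function} is tailored so that the modification layer $\phi_g$ is essentially free: its boundedness clause costs only the additive constant $B_g$, and --- since $\phi_g$ does not depend on $\theta$ --- its Lipschitz clause in the $\mathbf f$-argument has constant $1$. Concretely, Assumption \ref{assumption: modified function} gives at once $|f_i^{\mathbf{mod}}(\bx)|\le|\mathbf f(\bx)|+B_g$ and $|f_i^{\mathbf{mod}}(\bx;\theta)-f_i^{\mathbf{mod}}(\bx;\tilde\theta)|\le|\mathbf f(\bx;\theta)-\mathbf f(\bx;\tilde\theta)|$, so it suffices to bound $\|\mathbf f(\bx;\theta)\|$ and $\|\mathbf f(\bx;\theta)-\mathbf f(\bx;\tilde\theta)\|$.

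For the a priori bound I would argue layer by layer. Since $\Omega\subset[0,1]^d$ we have $\|\mathbf f_0(\bx)\|_\infty=\|\bx\|_\infty\le1$, and by Proposition \ref{assumption: activation function} ($|\rho|\le1$) the hidden outputs satisfy $\|\mathbf f_\ell(\bx)\|_\infty\le1$ for $\ell=1,\dots,\mathcal D-1$. The affine output layer together with the weight bound $B_\theta$ then gives $|f_i(\bx)|\le B_\theta$ (absorbing the width factor of the penultimate layer into $B_\theta$), whence $|f_i^{\mathbf{mod}}(\bx)|\le|\mathbf f(\bx)|+B_g\le B_\theta+B_g$ by Assumption \ref{assumption: modified function}.

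For the parameter-Lipschitz bound, write $\mathbf f_\ell^\theta,\mathbf f_\ell^{\tilde\theta}$ for the $\ell$-th layer outputs of \eqref{network} under the parameters $\theta$ and $\tilde\theta$, and set $e_\ell:=\|\mathbf f_\ell^\theta-\mathbf f_\ell^{\tilde\theta}\|_\infty$, so $e_0=0$. Subtracting the two recursions and using that $\rho$ is $1$-Lipschitz (Proposition \ref{assumption: activation function}), the bound $\|A_\ell\boldsymbol v\|_\infty\le n_{\ell-1}B_\theta\|\boldsymbol v\|_\infty$, the a priori estimate $\|\mathbf f_{\ell-1}^{\tilde\theta}\|_\infty\le1$, and Cauchy--Schwarz to pass from the $\ell^1$-norm of a row of $A_\ell-\tilde A_\ell$ (and of $\mathbf b_\ell-\tilde{\mathbf b}_\ell$) to $\|\theta-\tilde\theta\|_2$, one obtains $e_\ell\le n_{\ell-1}B_\theta\,e_{\ell-1}+2\sqrt{n_{\ell-1}}\,\|\theta-\tilde\theta\|_2$ for $\ell=1,\dots,\mathcal D$ (at $\ell=\mathcal D$ there is no $\rho$, which only helps). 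Iterating this recursion from $e_0=0$ yields $e_{\mathcal D}\le\|\theta-\tilde\theta\|_2\sum_{\ell=1}^{\mathcal D}2\sqrt{n_{\ell-1}}\prod_{m=\ell+1}^{\mathcal D}n_{m-1}B_\theta$; bounding each width by $\mathfrak n_{\mathcal D}$, noting that the $e_0=0$ initialization removes one power of $B_\theta$, and absorbing the $\mathcal D$-many summands, this collapses to $\sqrt{\mathfrak n_{\mathcal D}}\,B_\theta^{\mathcal D-1}\big(\prod_{j=1}^{\mathcal D-1}n_j\big)\|\theta-\tilde\theta\|_2$, which combined with the reduction in the first paragraph is the claim.

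\textbf{Main obstacle.} Nothing in the argument is deep: the only analytic inputs are $|\rho|,|\rho'|\le1$ and the $1$-Lipschitzness of $\rho$ from Proposition \ref{assumption: activation function}, Cauchy--Schwarz, $\Omega\subset[0,1]^d$, and the two clauses of Assumption \ref{assumption: modified function}. The real work is the bookkeeping in the last step: iterating the recursion so that precisely the power $B_\theta^{\mathcal D-1}$, the single $\sqrt{\mathfrak n_{\mathcal D}}$, and the width product $\prod_{j=1}^{\mathcal D-1}n_j$ emerge --- in particular checking that the $e_0=0$ initialization saves one factor of $B_\theta$ and that every additive $\sqrt{n_{\ell-1}}$ term, once multiplied by the trailing $n_{m-1}B_\theta$ factors, is dominated by one copy of $\sqrt{\mathfrak n_{\mathcal D}}$ times the full width product, with the remaining dimensional and $\mathcal D$-dependent constants absorbed into $\mathfrak n_{\mathcal D}$.
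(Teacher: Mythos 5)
Your argument is correct and follows essentially the same route as the paper's proof: a layer-by-layer recursion using $|\rho|\le 1$ and the $1$-Lipschitzness of $\rho$, Cauchy--Schwarz to convert parameter differences into $\|\theta-\tilde\theta\|_2$, and the boundedness/Lipschitz clauses of Assumption \ref{assumption: modified function} to pass through the modification layer (the paper works with the layerwise $\ell^1$-sum and applies Cauchy--Schwarz once at the end, you work in the sup norm and apply it per layer, which is immaterial). The constant-absorption caveats you flag (the penultimate-layer width in the sup bound, the factor $2$ and the $\mathcal{D}$ summands in the iterated recursion, requiring e.g.\ $B_\theta n_\ell\ge 1$ and $\mathfrak{n}_{\mathcal{D}}\gtrsim d$) are present, implicitly, in the paper's own derivation as well.
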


\begin{proof}
    To compute the Lipschitz constant, we can write directly from the definition of $N_i$ and proposition \ref{assumption: activation function} that
    \begin{equation*}
        \begin{aligned}
            \left|f_i^{(\ell)}-\tilde{f}_i^{(\ell)}\right|
            &= \Big|\rho\Big(\sum_{j=1}^{n_{\ell-1}}a_{ij}^{(\ell)}f_j^{(\ell-1)}+b_i^{(l)}\Big)
            -\rho\Big(\sum_{j=1}^{n_{\ell-1}}\tilde{a}_{ij}^{(\ell)}\tilde{f}_j^{(\ell-1)}-\tilde{b}_i^{(\ell)}\Big)\Big|\\
            &\leq
            \sum_{j=1}^{n_{\ell-1}}\left|a_{ij}^{(\ell)}f_j^{(\ell-1)}-\tilde{a}_{ij}^{(\ell)}\tilde{f}_j^{(\ell-1)}\right|
            +
            \left|b_i^{(\ell)}-\tilde{b}_i^{(\ell)}\right|\\
            &\leq\sum_{j=1}^{n_{\ell-1}}\left|a_{ij}^{(\ell)}\right|\left|f_j^{(\ell-1)}-\tilde{f}_j^{(\ell-1)}\right|\\
            &\qquad +
            \sum_{j=1}^{n_{\ell-1}}\left|a_{ij}^{(\ell)}-\tilde{a}_{ij}^{(\ell)}\right|\left|\tilde{f}_j^{(\ell-1)}\right|+\left|b_i^{(\ell)}-\tilde{b}_i^{(\ell)}\right|,
        \end{aligned}
    \end{equation*}
    summing up the above estimate for $i=1,\dots,n_{\ell}$, we have
    \begin{equation}\label{difference of f}
        \begin{aligned}
            \sum_{i=1}^{n_{\ell}}\left|f_i^{(\ell)}-\tilde{f}_i^{(\ell)}\right|
            &\leq B_{\theta} n_{\ell}
            \sum_{j=1}^{n_{\ell-1}}\left|f_j^{(\ell-1)}-\tilde{f}_j^{(\ell-1)}\right|\\
            & +
            \sum_{i=1}^{n_{\ell}} \Big(\sum_{j=1}^{n_{\ell-1}}\left|a_{ij}^{(\ell)}-\tilde{a}_{ij}^{(\ell)}\right|
            +\left|b_i^{(\ell)}-\tilde{b}_i^{(\ell)}\right|\Big).
        \end{aligned}
    \end{equation}
    Now apply $\ell=1$ to \eqref{difference of f} in which case $\bf^{(0)}=\tilde{\bf}^{(0)}=\bx$, then we can write
    \begin{equation}\label{difference of f_1}
        \begin{aligned}
            \sum_{i=1}^{n_{1}}
            \left|f_i^{(1)}-\tilde{f}_i^{(1)}\right|
            &\leq
            \sum_{i=1}^{n_{1}} \Big(
            \sum_{j=1}^{n_{0}}\left|a_{ij}^{(l)}-\tilde{a}_{ij}^{(l)}\right|
            +\left|b_i^{(l)}-\tilde{b}_i^{(l)}\right|\Big)
            =
            \sum_{j=1}^{\mathfrak{n}_1} \left|\theta_j-\tilde{\theta}_j\right|.
        \end{aligned}
    \end{equation}
    Similarly, take $\ell=2$ in \eqref{difference of f} and apply the result in \eqref{difference of f_1}, then one can derive 
    \begin{equation*}
        \begin{aligned}
         \sum_{i=1}^{n_{2}}
            \left|f_i^{(2)}-\tilde{f}_i^{(2)}\right|
            \leq &
            B_{\theta} n_2
            \sum_{j=1}^{\mathfrak{n}_1} \left|\theta_j-\tilde{\theta}_j\right|
            +
             \sum_{i=1}^{n_{2}}\Big(
            \sum_{j=1}^{n_{1}}\left|a_{ij}^{(2)}-\tilde{a}_{ij}^{(2)}\right|\\
            &\qquad\qquad+
            \left|b_i^{(2)}-\tilde{b}_i^{(2)}\right|\Big)
            \le 
            B_{\theta} n_2
            \sum_{j=1}^{\mathfrak{n}_2} \left|\theta_j-\tilde{\theta}_j\right|.
        \end{aligned}
    \end{equation*}
    Iterating above process,
    we finally obtain for $\ell= 1,\dots \mathcal{D}$ that
    \begin{equation}\label{difference of f_l}
\begin{aligned}
\sum_{i=1}^{n_{\ell}}
    \left|f_i^{(\ell)}-\tilde{f}_i^{(\ell)}\right| 
    &\leq 
    B_{\theta}^{\ell-1} 
    \Bigg( \prod_{j=1}^{\ell} n_j \Bigg ) 
    \sum_{j=1}^{\mathfrak{n}_\ell}\left|\theta_j-\tilde{\theta}_j\right|
    \le 
    \sqrt{\mathfrak{n}_\ell}B_{\theta}^{\ell-1} \Bigg( \prod_{j=1}^{\ell} n_j \Bigg ) \big\|\theta-\tilde{\theta}\big\|_2.
\end{aligned}    
\end{equation}
Taking $\ell = \mathcal{D}$ in above estimate, then the Lemma follows from
Assumption \ref{assumption: modified function}.
   
\end{proof}

\begin{lemma}\label{bound of f'}
Set the parameterized function class $\mathcal{P}=\mathcal{N}_{\rho}(\mathcal{D},\mathfrak{n}_\mathcal{D},B_{\theta})$ of network \eqref{network}-\eqref{modified layer}. Under Assumption \ref{assumption: modified function}, for any $\mathbf{f}^{\mathbf{mod}}\in \mathcal{P}$ we have 
\begin{equation}\label{estimate of derivative of f^mod}
    \begin{aligned}
        &|\partial_{\bx_p}f_i^{\mathbf{mod}}(x;\theta)|
        \leq
        \Bigg(\prod_{j=1}^{\mathcal{D}-1}n_j\Bigg)
        B_{\theta}^{\mathcal{D}}
        + B_{g'} .    
     \end{aligned}
\end{equation}
\end{lemma}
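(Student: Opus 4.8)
The plan is to differentiate the forward recursion \eqref{network} with respect to $x_p$, propagate the resulting bound one layer at a time, and then pass through the modification layer using Assumption \ref{assumption: modified function}. For a hidden layer $1\le \ell\le \mathcal{D}-1$, the chain rule applied to $\mathbf{f}^{(\ell)}=\rho(A_\ell\mathbf{f}^{(\ell-1)}+\mathbf{b}_\ell)$ gives, componentwise,
\[
\partial_{x_p} f_i^{(\ell)} = \rho'\!\Big(\sum_{j} a_{ij}^{(\ell)} f_j^{(\ell-1)} + b_i^{(\ell)}\Big)\sum_j a_{ij}^{(\ell)}\,\partial_{x_p} f_j^{(\ell-1)}.
\]
Invoking $|\rho'|\le 1$ from Proposition \ref{assumption: activation function} and $|a_{ij}^{(\ell)}|\le B_\theta$ yields $|\partial_{x_p}f_i^{(\ell)}|\le B_\theta\sum_j |\partial_{x_p}f_j^{(\ell-1)}|$, and summing over $i=1,\dots,n_\ell$ gives the one-layer recursion $\sum_{i=1}^{n_\ell} |\partial_{x_p}f_i^{(\ell)}| \le B_\theta\, n_\ell \sum_{j=1}^{n_{\ell-1}} |\partial_{x_p}f_j^{(\ell-1)}|$.

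Since $\mathbf{f}^{(0)}=\bx$ we have $\partial_{x_p} f_j^{(0)} = \delta_{jp}$, hence $\sum_j |\partial_{x_p}f_j^{(0)}| = 1$; iterating the recursion then gives $\sum_{i}|\partial_{x_p}f_i^{(\ell)}| \le B_\theta^\ell \prod_{j=1}^\ell n_j$ for $\ell=1,\dots,\mathcal{D}-1$. The output layer $\mathbf{f}^{(\mathcal{D})}=A_\mathcal{D}\mathbf{f}^{(\mathcal{D}-1)}+\mathbf{b}_\mathcal{D}$ carries no activation, so $|\partial_{x_p} f_i^{(\mathcal{D})}| \le B_\theta \sum_j |\partial_{x_p}f_j^{(\mathcal{D}-1)}| \le B_\theta^{\mathcal{D}}\prod_{j=1}^{\mathcal{D}-1} n_j$; thus the raw network output $\mathbf{f}=\mathbf{f}_\mathcal{D}$ satisfies $|\partial_{x_p} f_i| \le \big(\prod_{j=1}^{\mathcal{D}-1}n_j\big) B_\theta^{\mathcal{D}}$. (This is exactly the path-counting bound: there are $\prod_{j=1}^{\mathcal{D}-1}n_j$ input-to-output paths, each contributing a product of $\mathcal{D}$ weights bounded by $B_\theta$ times derivative factors bounded by $1$.)

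Finally, applying the boundedness part of Assumption \ref{assumption: modified function} to $\mathbf{f}^{\mathbf{mod}}=\phi_g(\bx,\mathbf{f}(\bx))$ gives $|\partial_{x_p} f_i^{\mathbf{mod}}(\bx;\theta)| \le |\partial_{x_p}f_i(\bx)| + B_{\phi'}|f_i(\bx)| + B_{g'}$. For $|f_i(\bx)|$ one uses Lemma \ref{bound and Lipschitz of f} (equivalently, the identical layerwise argument with $|\rho|\le 1$ replacing $|\rho'|\le 1$), so that
\[
|\partial_{x_p} f_i^{\mathbf{mod}}(\bx;\theta)| \le \Big(\prod_{j=1}^{\mathcal{D}-1}n_j\Big) B_\theta^{\mathcal{D}} + B_{\phi'}(B_\theta + B_g) + B_{g'};
\]
the lower-order term $B_{\phi'}(B_\theta + B_g)$, whose prefactor depends only on $\Omega$ and $g$, is absorbed into the two displayed terms to give \eqref{estimate of derivative of f^mod}.

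I do not expect a genuine obstacle here, since the heart of the argument is a single scalar induction over the layer index. The step requiring care is the modification layer: unlike the purely linear output map, $\phi_g$ contributes a term proportional to $|\mathbf{f}|$ rather than merely $|\partial_{x_p}\mathbf{f}|$, together with an explicit $\bx$-dependence encoded in $B_{g'}$, so this step must invoke the already-established sup-norm bound of Lemma \ref{bound and Lipschitz of f} and keep careful track of which constants are domain/boundary data ($B_{\phi'},B_g,B_{g'}$) versus network quantities ($B_\theta,n_j,\mathcal{D}$) when absorbing lower-order contributions. A minor secondary point is reading the vector inequality in Assumption \ref{assumption: modified function} componentwise, so that it lines up with the componentwise estimate produced in the first two steps.
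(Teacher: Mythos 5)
Your argument follows essentially the same route as the paper's proof: the layerwise recursion $\sum_i|\partial_{x_p}f_i^{(\ell)}|\le B_\theta n_\ell\sum_j|\partial_{x_p}f_j^{(\ell-1)}|$ obtained from $|\rho'|\le 1$ and $|a_{ij}^{(\ell)}|\le B_\theta$, iterated to give $\big(\prod_{j=1}^{\mathcal{D}-1}n_j\big)B_\theta^{\mathcal{D}}$ for the raw output, followed by the boundedness condition of Assumption \ref{assumption: modified function} for the modification layer. In fact you are slightly more careful than the paper, which silently drops the lower-order contribution $B_{\phi'}|\mathbf{f}(\bx)|$ that you explicitly bound and absorb.
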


\begin{proof}
For $\ell=1,2,\dots,\mathcal{D}-1$, one can write from definition that
\begin{equation}\label{iterate of derivative of f}
    \begin{aligned}
\sum_{i=1}^{n_{\ell}}
\left|\partial_{x_p} f_i^{(\ell)}\right| 
& =
\sum_{i=1}^{n_{\ell}} 
\Bigg|\sum_{j=1}^{n_{\ell-1}} a_{i j}^{(\ell)} 
\partial_{x_p} f_j^{(\ell-1)} \rho^{\prime}
\Big(\sum_{j=1}^{n_{\ell-1}} a_{i j}^{(\ell)} 
f_j^{(\ell-1)}+b_q^{(\ell)}\Big)\Bigg| \\
& \leq 
B_\theta 
\sum_{i=1}^{n_{\ell}} 
\sum_{j=1}^{n_{\ell-1}}
\left|\partial_{x_p} f_j^{(\ell-1)}\right| 
\le 
B_\theta 
n_{\ell} 
\sum_{j=1}^{n_{\ell-1}}
\left|\partial_{x_p} f_j^{(\ell-1)}\right|.
\end{aligned}
\end{equation}
Iterating above process, one can obtain for $\ell= 1,2,\dots \mathcal{D}-1$
\begin{equation}\label{bound of derivative of f^l}
    \begin{aligned}
    \sum_{i=1}^{n_{\ell}} 
|\partial_{x_p}f_i^{(\ell)}|
\leq 
\Bigg(\prod_{j=1}^{\ell}n_j\Bigg)
B_{\theta}^{\ell}.
\end{aligned}
\end{equation}
Taking $\ell = \mathcal{D}-1$ in above estimate, and apply \eqref{iterate of derivative of f} similarly for the estimate of $f_i^{(\mathcal{D})}$, we finally have
\begin{equation*}
    \begin{aligned}
|\partial_{x_p}f_i^{(\mathcal{D})}|
\leq 
\Bigg(\prod_{j=1}^{\mathcal{D}-1}n_j\Bigg)
B_{\theta}^{\mathcal{D}}.
\end{aligned}
\end{equation*}
Under the boundedness condition in Assumption \ref{assumption: modified function} the estimate \eqref{estimate of derivative of f^mod} follows.

\end{proof}


\begin{lemma}\label{Lipschitz of f'}
    Set the parameterized function class $\mathcal{P}=\mathcal{N}_{\rho}(\mathcal{D},\mathfrak{n}_\mathcal{D},B_{\theta})$ of network \eqref{network}-\eqref{modified layer}. Under Assumption \ref{assumption: modified function}, for any $\mathbf{f}^{\mathbf{mod}}\in \mathcal{P}$ we have 
\begin{equation}\label{Lifschitz of derivative of f^mod}
\left|\partial_{x_p}f_i^{\mathbf{mod}} (\bx;\theta)-\partial_{x_p}f_i^{\mathbf{mod}} (\bx;\tilde{\theta})\right|
\leq 
\sqrt{\mathfrak{n}_{\mathcal{D}}}
(\mathcal{D}+2)
B_{\theta}^{2\mathcal{D}-1}\Bigg(\prod_{k=1}^{\mathcal{D}-1}n_k\Bigg)^2\big\|\theta-\tilde{\theta}\big\|_2.
\end{equation}
\end{lemma}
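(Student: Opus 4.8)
The plan is to run the same layer-by-layer recursion used for Lemma~\ref{bound and Lipschitz of f}, but now on the differentiated network. Introduce the pre-activations $z_i^{(\ell)}(\theta) = \sum_{j=1}^{n_{\ell-1}} a_{ij}^{(\ell)} f_j^{(\ell-1)} + b_i^{(\ell)}$, so that by the chain rule $\partial_{x_p} f_i^{(\ell)} = \sum_j a_{ij}^{(\ell)}\,\rho'(z_i^{(\ell)})\,\partial_{x_p} f_j^{(\ell-1)}$ for $\ell = 1,\dots,\mathcal{D}-1$, and $\partial_{x_p} f_i^{(\mathcal{D})} = \sum_j a_{ij}^{(\mathcal{D})}\,\partial_{x_p} f_j^{(\mathcal{D}-1)}$ for the activation-free output layer. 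First I would telescope the difference between the $\theta$-network and the $\tilde\theta$-network into the three standard pieces: one carrying $a_{ij}^{(\ell)} - \tilde a_{ij}^{(\ell)}$, one carrying $\partial_{x_p} f_j^{(\ell-1)}(\theta) - \partial_{x_p} f_j^{(\ell-1)}(\tilde\theta)$ (the recursive term), and one carrying $\rho'(z_i^{(\ell)}(\theta)) - \rho'(z_i^{(\ell)}(\tilde\theta))$.

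Next I would bound each piece with the ingredients already at hand. For the first piece, $|\rho'| \le 1$ (Proposition~\ref{assumption: activation function}) together with the derivative bound $\sum_j |\partial_{x_p} f_j^{(\ell-1)}| \le (\prod_{j=1}^{\ell-1} n_j) B_\theta^{\ell-1}$ from the proof of Lemma~\ref{bound of f'} and $\sum_{i,j}|a_{ij}^{(\ell)} - \tilde a_{ij}^{(\ell)}| \le \sqrt{\mathfrak{n}_\ell}\,\|\theta - \tilde\theta\|_2$ yields a term of order $\sqrt{\mathfrak{n}_\ell}\,B_\theta^{\ell-1}(\prod_{j\le\ell-1} n_j)\|\theta-\tilde\theta\|_2$. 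For the third piece, the Lipschitz property $|\rho'(x) - \rho'(y)| \le |x-y|$ (Proposition~\ref{assumption: activation function}) reduces the estimate to controlling the pre-activation difference $|z_i^{(\ell)}(\theta) - z_i^{(\ell)}(\tilde\theta)|$, which — exactly as in the proof of Lemma~\ref{bound and Lipschitz of f}, splitting into the contributions of $|f_j^{(\ell-1)}(\theta) - f_j^{(\ell-1)}(\tilde\theta)|$, $|a_{ij}^{(\ell)} - \tilde a_{ij}^{(\ell)}|$, $|b_i^{(\ell)} - \tilde b_i^{(\ell)}|$ and using \eqref{difference of f_l} — is bounded by $\sqrt{\mathfrak{n}_\ell}\,B_\theta^{\ell-1}(\prod_{j\le\ell-1} n_j)\|\theta-\tilde\theta\|_2$; multiplying by $|\tilde a_{ij}^{(\ell)}| \le B_\theta$, by the bound on $|\partial_{x_p} f_j^{(\ell-1)}(\tilde\theta)|$, and summing over $i,j$ gives a term of order $\sqrt{\mathfrak{n}_\ell}\,B_\theta^{2\ell-1}(\prod_{j\le\ell} n_j)(\prod_{j\le\ell-1} n_j)\|\theta-\tilde\theta\|_2$. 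The second piece is simply $\le B_\theta n_\ell \sum_j |\partial_{x_p} f_j^{(\ell-1)}(\theta) - \partial_{x_p} f_j^{(\ell-1)}(\tilde\theta)|$.

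Summing over $i$ produces a recursion of the form $S_\ell \le B_\theta n_\ell S_{\ell-1} + R_\ell \|\theta-\tilde\theta\|_2$ for $S_\ell \triangleq \sum_{i=1}^{n_\ell} |\partial_{x_p} f_i^{(\ell)}(\theta) - \partial_{x_p} f_i^{(\ell)}(\tilde\theta)|$, with $S_0 = 0$ (since $f^{(0)} = \bx$ is independent of $\theta$) and $R_\ell$ of the order displayed above. Iterating gives $S_{\mathcal{D}-1} \le \sum_{\ell=1}^{\mathcal{D}-1} \big(\prod_{m=\ell+1}^{\mathcal{D}-1} B_\theta n_m\big) R_\ell \|\theta-\tilde\theta\|_2$, where each summand telescopes to an expression of the same order as the right-hand side of \eqref{Lifschitz of derivative of f^mod} (before the $\mathcal{D}+2$ factor), so the sum over the $\mathcal{D}-1$ levels costs only a linear-in-$\mathcal{D}$ factor. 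A final application of the chain rule for the activation-free layer $\mathcal{D}$ adds two more terms of the same order (hence the ``$+2$''), and then the Lipschitz hypothesis of Assumption~\ref{assumption: modified function} gives $|\partial_{x_p} f_i^{\mathbf{mod}}(\theta) - \partial_{x_p} f_i^{\mathbf{mod}}(\tilde\theta)| \le |\partial_{x_p} f_i^{(\mathcal{D})}(\theta) - \partial_{x_p} f_i^{(\mathcal{D})}(\tilde\theta)| + B_{\phi'}|f_i^{(\mathcal{D})}(\theta) - f_i^{(\mathcal{D})}(\tilde\theta)|$, in which the last term is handled by Lemma~\ref{bound and Lipschitz of f} and is again of the claimed order. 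Collecting constants yields \eqref{Lifschitz of derivative of f^mod}. The main obstacle is the third (activation-Lipschitz) piece: it is the only term forcing the \emph{square} $(\prod_{k\le\mathcal{D}-1} n_k)^2$ and the high power $B_\theta^{2\mathcal{D}-1}$, and one must verify carefully that after telescoping it dominates the other two contributions while the sum over layers produces only the factor $\mathcal{D}+2$ rather than something exponential in $\mathcal{D}$.
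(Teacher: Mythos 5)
Your proposal is correct and follows essentially the same route as the paper's proof: the same layer-wise telescoping using the boundedness and Lipschitz continuity of $\rho'$, the previously established estimates \eqref{difference of f_l} and \eqref{bound of derivative of f^l}, the recursion $S_\ell \le B_\theta n_\ell S_{\ell-1} + R_\ell\|\theta-\tilde\theta\|_2$, a separate treatment of the activation-free output layer, and finally the Lipschitz condition of Assumption \ref{assumption: modified function} (together with Lemma \ref{bound and Lipschitz of f} for the $B_{\phi'}$ term) to pass to $\mathbf{f}^{\mathbf{mod}}$. The only cosmetic difference is that you group the telescoped difference into three pieces via pre-activations, whereas the paper splits in two stages before reaching the same recursion, so the constants and orders agree.
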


\begin{proof}
From definition we can write for $\ell= 1,\dots \mathcal{D}-1$
\begin{equation*}
    \begin{aligned}
\left|\partial_{x_p} f_i^{(\ell)}-\partial_{x_p} \widetilde{f}_i^{(\ell)}\right| 
= & 
\Big|\sum_{j=1}^{n_{\ell-1}} 
a_{i j}^{(\ell)}\partial_{x_p}f_j^{(\mathcal{\ell}-1)} \rho^{\prime}\Big(\sum_{j=1}^{n_{\ell-1}} 
a_{i j}^{(\ell)} \partial_{x_p}f_j^{(\mathcal{\ell}-1)} 
+
b_i^{(\ell)}\Big)\\
& -
\sum_{j=1}^{n_{\ell-1}}
\widetilde{a}_{i j}^{(\ell)}\partial_{x_p}\tilde{f}_j^{(\mathcal{\ell}-1)}  \rho^{\prime}\Big(\sum_{j=1}^{n_{\ell-1}} \widetilde{a}_{i j}^{(\ell)} 
\partial_{x_p}\tilde{f}_j^{(\mathcal{\ell}-1)} 
+
\widetilde{b}_i^{(\ell)}\Big)\Big|.
\end{aligned}
\end{equation*}
Using triangle inequality, it can be bounded from above by
\begin{equation*}
\begin{gathered}
\sum_{j=1}^{n_{\ell-1}}\Big|a_{i j}^{(\ell)}\Big|\left|\partial_{x_p} f_j^{(\ell-1)}\right|\Big|\rho^{\prime}\Big(\sum_{j=1}^{n_{\ell-1}} a_{i j}^{(\ell)} f_j^{(\ell-1)}+b_i^{(\ell)}\Big)-\rho^{\prime}\Big(\sum_{j=1}^{n_{\ell-1}} \widetilde{a}_{i j}^{(\ell)} \widetilde{f}_j^{(\ell-1)}+\widetilde{b}_i^{(\ell)}\Big)\Big| \\
+
\sum_{j=1}^{n_{\ell-1}}\left|a_{i j}^{(\ell)} \partial_{x_p} f_j^{(\ell-1)}-\widetilde{a}_{i j}^{(\ell)} \partial_{x_p} \widetilde{f}_j^{(\ell-1)}\right|\Big|\rho^{\prime}\Big(\sum_{j=1}^{n_{\ell-1}} \widetilde{a}_{i j}^{(\ell)} \widetilde{f}_j^{(\ell-1)}
+
\widetilde{b}_i^{(\ell)}\Big)\Big|,
\end{gathered}
\end{equation*}
thus applying the boundedness and Lipschitz continuity of $\rho'$, it continues to be bounded from above by
\begin{equation*}
\begin{gathered}
B_\theta \sum_{j=1}^{n_{\ell-1}}\left|\partial_{x_p} f_j^{(\ell-1)}\right|\Big(\sum_{j=1}^{n_{\ell-1}}\left|a_{i j}^{(\ell)} f_j^{(\ell-1)}-\widetilde{a}_{i j}^{(\ell)} \widetilde{f}_j^{(\ell-1)}\right|
+
\left|b_q^{(\ell)}-\widetilde{b}_q^{(\ell)}\right|\Big) \\
+
 \sum_{i=1}^{n_{\ell-1}}\left|a_{i j}^{(\ell)} \partial_{x_p} f_j^{(\ell-1)}-\widetilde{a}_{i j}^{(\ell)} \partial_{x_p} \widetilde{f}_j^{(\ell-1)}\right|.
\end{gathered}
\end{equation*}
Again using the triangle inequality, we can bound above formula by
\begin{equation}\label{iterate of lifschitz estimate of f'}
\begin{gathered}
B_\theta  \sum_{j=1}^{n_{\ell-1}}\left|\partial_{x_p} f_j^{(\ell-1)}\right|\Big(\sum_{j=1}^{n_{\ell-1}}\left|a_{i j}^{(\ell)}-\widetilde{a}_{i j}^{(\ell)}\right|+B_\theta \sum_{j=1}^{n_{\ell-1}}\left|f_j^{(\ell-1)}-\tilde{f}_j^{(\ell-1)}\right|+\left|b_i^{(\ell)}-\widetilde{b}_i^{(\ell)}\right|\Big) \\
+
 B_\theta \sum_{j=1}^{n_{\ell-1}}\left|\partial_{x_p} f_j^{(\ell-1)}-\partial_{x_p} \widetilde{f}_j^{(\ell-1)}\right|+ \sum_{j=1}^{n_{\ell-1}}\left|a_{i j}^{(\ell)}-\widetilde{a}_{i j}^{(\ell)}\right|\left|\partial_{x_p} \widetilde{f}_j^{(\ell-1)}\right| .
\end{gathered}
\end{equation}
Now applying \eqref{difference of f_l} and \eqref{bound of derivative of f^l} to \eqref{iterate of lifschitz estimate of f'}, the above estimates for $\ell= 2,3,\dots \mathcal{D}-1$ finally  leads to
\begin{equation}\label{iterate of lifschitz estimate of f' final form}
    \begin{aligned}
    \sum_{i=1}^{n_\ell}
    \left|\partial_{x_p} f_i^{(\ell)}-\partial_{x_p} \widetilde{f}_i^{(\ell)}\right| 
        \leq &
        B_\theta n_\ell \sum_{j=1}^{n_{\ell-1}} 
        \left|\partial_{x_p} f_j^{(\ell-1)}-\partial_{x_p} \widetilde{f}_j^{(\ell-1)}\right| \\
        & +
        B_\theta^{2 \ell-1} n_\ell
        \Bigg(\prod_{i=1}^{\ell-1} n_i\Bigg)^2 \sum_{k=1}^{\mathfrak{n}_{\ell}}\left|\theta_k-\widetilde{\theta}_k\right|.
    \end{aligned}
\end{equation}
On the other hand, take $\ell =1$ in \eqref{iterate of lifschitz estimate of f'} and note that $\bf^{(0)}=\tilde{\bf}^{(0)}=\bx$, we can obtain
\begin{equation*}
    \begin{aligned}
\sum_{i=1}^{n_1}
\left|\partial_{x_p} f_i^{(1)}-\partial_{x_p} \widetilde{f}_i^{(1)}\right| 
\leq 
B_\theta 
\sum_{i=1}^{n_1}\Big(\sum_{j=1}^{n_0}
\left|a_{i j}^{(1)}-\widetilde{a}_{i j}^{(1)}\right|
+
\left|b_i^{(1)}-\widetilde{b}_i^{(1)}\right| \Big) 
\leq 
B_\theta \sum_{k=1}^{\mathfrak{n}_1}\left|\theta_k-\widetilde{\theta}_k\right|.
\end{aligned}
\end{equation*}
Starting from above estimate, and iterating the process \eqref{iterate of lifschitz estimate of f' final form}, we can get
\begin{equation}
\sum_{i=1}^{n_{\mathcal{D}-1}}
\left|\partial_{x_p} f_i^{(\mathcal{D}-1)}
-\partial_{x_p} \widetilde{f}_i^{(\mathcal{D}-1)}\right|
\leq \left(\mathcal{D}-1\right)B_{\theta}^{2\mathcal{D}-3}\Bigg(\prod_{k=1}^{\mathcal{D}-1}n_k\Bigg)^2
\sum_{k=1}^{\mathfrak{n}_{\mathcal{D}}}
\left|\theta_k-\widetilde{\theta}_k\right|.
\end{equation}
As for the estimate of $f_i^D$, one can easily derive
\begin{equation*}
\begin{aligned}
\left|\partial_{x_p} f_i^{(\mathcal{D})}-\partial_{x_p} \widetilde{f}_i^{(\mathcal{D})}\right| 
\le & 
B_\theta \sum_{j=1}^{n_{\mathcal{D}-1}}
\left|\partial_{x_p} f_j^{(\mathcal{D}-1)}
-
\partial_{x_p} \widetilde{f}_j^{(\mathcal{D}-1)}\right|
+
 \sum_{j=1}^{n_{\mathcal{D}-1}}\left|a_{i j}^{(\mathcal{D})}
 -
 \widetilde{a}_{i j}^{(\mathcal{D})}\right|\left|\partial_{x_p} \widetilde{f}_j^{(\mathcal{D}-1)}\right| \\
 \le & 
B_\theta \sum_{j=1}^{n_{\mathcal{D}-1}}
\left|\partial_{x_p} f_j^{(\mathcal{D}-1)}
-
\partial_{x_p} \widetilde{f}_j^{(\mathcal{D}-1)}\right|
+
B_{\theta}^{\mathcal{D}-1}
\Bigg(\prod_{j=1}^{\mathcal{D}-1}n_j\Bigg)
 \sum_{k=1}^{\mathfrak{n}_{\mathcal{D}}}
\left|\theta_k-\widetilde{\theta}_k\right|.
 \end{aligned}
\end{equation*}
Under the Lipschitz condition in Assumption \ref{assumption: modified function}, the estimate \eqref{Lifschitz of derivative of f^mod} follows.

\end{proof}

Now let us turn back to the parameterized function space $\mathcal{F}_i$, $i=1,\dots,8$ defined in Lemma \ref{lemma: function class Fk}, it can be concluded from above results that:

\begin{lemma}\label{lemma: bound and L-constant}
    Under Assumption \ref{assumption: modified function} of network \eqref{network}-\eqref{modified layer}, for any function $F_i(\bx;\theta) \in\mathcal{F}_i$, $i=1,\dots,8$, we have
    \begin{equation}
    \begin{aligned}
        |F_i(\bx;\theta)| \leq B_i,\quad
        |F_i(\bx;\theta)-F_i(\bx;\tilde{\theta})|&\leq L_i\|\theta-\tilde{\theta}\|_2,
    \end{aligned}\nonumber
\end{equation}
for any $x\in\Omega$, with 
\begin{equation*}
    B_i \le 4d \Bigg(\prod_{j=1}^{\mathcal{D}-1}n_j\Bigg)^2
        B_{\theta}^{2\mathcal{D}},
    \qquad 
    L_i \le  4d \sqrt{\mathfrak{n}_{\mathcal{D}}} (\mathcal{D}+2)
B_{\theta}^{3\mathcal{D}-1}\Bigg(\prod_{j=1}^{\mathcal{D}-1}n_j\Bigg)^3,
\end{equation*}
for all $i=1,\dots,8$.

\end{lemma}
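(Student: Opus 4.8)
The plan is to reduce the two estimates for each of the eight classes $\mathcal{F}_i$ to the per-component bounds already obtained in Lemmas~\ref{bound and Lipschitz of f}, \ref{bound of f'} and \ref{Lipschitz of f'}. Writing the network output as $(u,\bp)=\mathbf{f}^{\mathbf{mod}}=(f_1^{\mathbf{mod}},\dots,f_{d+1}^{\mathbf{mod}})$, so that $u=f_1^{\mathbf{mod}}$, $\nabla u=(\partial_{x_1}f_1^{\mathbf{mod}},\dots,\partial_{x_d}f_1^{\mathbf{mod}})$ and $\divp=\sum_{p=1}^{d}\partial_{x_p}f_{p+1}^{\mathbf{mod}}$, every $F_i$ is a finite sum of products of at most two factors, each factor being either a scalar output $f_j^{\mathbf{mod}}$ or a first derivative $\partial_{x_p}f_j^{\mathbf{mod}}$; for instance $F_1=\sum_{p=1}^{d}(f_{p+1}^{\mathbf{mod}})^2$, $F_3=\sum_{p=1}^{d}(\partial_{x_p}f_1^{\mathbf{mod}})^2$, $F_4=\big(\sum_{p=1}^{d}\partial_{x_p}f_{p+1}^{\mathbf{mod}}\big)^2$, $F_5=\big(\sum_{p=1}^{d}\partial_{x_p}f_{p+1}^{\mathbf{mod}}\big)f_1^{\mathbf{mod}}$, and $F_8=f_1^{\mathbf{mod}}$. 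I would therefore first record four ``master'' quantities valid for every scalar index: the $L^\infty(\Omega)$ bounds $M_0:=B_\theta+B_g$ on $f_j^{\mathbf{mod}}$ and $M_1:=\big(\prod_{j=1}^{\mathcal{D}-1}n_j\big)B_\theta^{\mathcal{D}}+B_{g'}$ on $\partial_{x_p}f_j^{\mathbf{mod}}$ (from Lemmas~\ref{bound and Lipschitz of f}, \ref{bound of f'}), together with the parameter-Lipschitz constants $L_0:=\sqrt{\mathfrak{n}_{\mathcal{D}}}\,B_\theta^{\mathcal{D}-1}\big(\prod_{j=1}^{\mathcal{D}-1}n_j\big)$ and $L_1:=\sqrt{\mathfrak{n}_{\mathcal{D}}}\,(\mathcal{D}+2)B_\theta^{2\mathcal{D}-1}\big(\prod_{j=1}^{\mathcal{D}-1}n_j\big)^2$ (from Lemmas~\ref{bound and Lipschitz of f}, \ref{Lipschitz of f'}).

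Next I would invoke the elementary product rules: if scalar functions $a,b$ of $\theta$ satisfy $|a|\le A$, $|b|\le B$ uniformly in $\bx$ and are $L_a$- and $L_b$-Lipschitz in $\theta$, then $|ab|\le AB$ and $ab$ is $(AL_b+BL_a)$-Lipschitz, while $|a^2|\le A^2$ and $a^2$ is $2AL_a$-Lipschitz. Applying these factorwise and then summing over the $d$ terms produced by $|\bp|^2$, $|\nabla u|^2$ and --- crucially --- by $\divp$ (whose square, once expanded, carries an extra factor $d$), one obtains term-by-term estimates of the shape $|F_i(\bx;\theta)|\le C d^2\max(M_0,M_1)^2$ and $|F_i(\bx;\theta)-F_i(\bx;\tilde\theta)|\le C d^2\big(M_0L_1+M_1L_0\big)\|\theta-\tilde\theta\|_2$ with a universal $C$. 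In every one of the eight cases the class $\mathcal{F}_4=\{|\divp|^2\}$ realises the worst constants, since $\divp$ combines the largest derivative bound $M_1$ with the extra summation over $d$ indices.

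Finally I would absorb the boundary-layer constants. In the regime of interest one has $B_\theta\ge 1$, $n_j\ge 1$, and $B_g,B_{g'}$ are fixed by the data, hence for the architectures admissible in Lemma~\ref{lemma: approximation} they are dominated by $\big(\prod_{j=1}^{\mathcal{D}-1}n_j\big)B_\theta^{\mathcal{D}}$; thus $M_0\le 2B_\theta$ and $M_1\le 2\big(\prod_{j=1}^{\mathcal{D}-1}n_j\big)B_\theta^{\mathcal{D}}$. Substituting these into the term-by-term estimates of the previous paragraph and collecting the powers of $B_\theta$, $\mathfrak{n}_{\mathcal{D}}$ and $\prod_{j=1}^{\mathcal{D}-1}n_j$ yields $B_i\le 4d\big(\prod_{j=1}^{\mathcal{D}-1}n_j\big)^2B_\theta^{2\mathcal{D}}$ and $L_i\le 4d\sqrt{\mathfrak{n}_{\mathcal{D}}}\,(\mathcal{D}+2)B_\theta^{3\mathcal{D}-1}\big(\prod_{j=1}^{\mathcal{D}-1}n_j\big)^3$ uniformly in $i=1,\dots,8$ and in $\bx\in\Omega$; note that these bounds depend only on the architecture and not on $\theta$, which is exactly the form required by Lemma~\ref{lemma: bound in terms of eps-cover}.

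The step I expect to be the most delicate is the bookkeeping rather than any individual inequality: one must verify that the additive terms $B_g$ and $B_{g'}$ contributed by the modification layer are genuinely of lower order than the network-scale quantities (so that sweeping them into the displayed constants is legitimate), and one must track carefully the powers of $d$ generated by the inner sum defining $\divp$, so as to confirm that $\mathcal{F}_4$ governs all eight function classes simultaneously. Everything else is a mechanical application of the triangle inequality and the product rules above to the estimates of Lemmas~\ref{bound and Lipschitz of f}--\ref{Lipschitz of f'}.
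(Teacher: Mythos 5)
Your proposal follows essentially the same route as the paper: reduce each class $\mathcal{F}_i$ to the componentwise sup-norm bounds and parameter-Lipschitz constants of Lemmas \ref{bound and Lipschitz of f}--\ref{Lipschitz of f'}, apply the elementary product/square rules, sum over the $d$ components, and absorb $B_g,B_{g'}$ into the architecture-dependent constants. The only divergence is bookkeeping: you correctly observe that $(\divp)^2$ carries a $d^2$ factor, whereas the paper's tabulation (and the stated constant $4d$) records only a single factor of $d$ for $\mathcal{F}_4$; your accounting is the more careful one, and the extra power of $d$ is harmless for the downstream use of the lemma.
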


\begin{proof}
Let us denote the upper bound and Lipschitz constant of $f_i^{\mathbf{mod}}$ in Lemma \ref{bound and Lipschitz of f} by 
\begin{equation*}
    B_f = B_{\theta} + B_{\phi},
    \qquad 
    L_f = \sqrt{\mathfrak{n}_\mathcal{D}}B_{\theta}^{\mathcal{D}-1} \Bigg( \prod_{j=1}^{\mathcal{D}-1} n_j \Bigg ).
\end{equation*}
Furthermore, we can denote the upper bound and Lipschitz constant of $\partial_{x_p}f_i^{\mathbf{mod}}$ in Lemma \ref{bound of f'}-\ref{Lipschitz of f'} by 
\begin{equation*}
    B_{f'} = \Bigg(\prod_{j=1}^{\mathcal{D}-1}n_j\Bigg)
        B_{\theta}^{\mathcal{D}} + B_{\phi'},
    \qquad 
    L_{f'} = \sqrt{\mathfrak{n}_{\mathcal{D}}}
(\mathcal{D}+2)
B_{\theta}^{2\mathcal{D}-1}\Bigg(\prod_{j=1}^{\mathcal{D}-1}n_j\Bigg)^2.
\end{equation*}
Then combining the previous results, one can directly obtain that 
\begin{equation}
    \left\{\begin{aligned}
        &B_1 = dB_f^2,
        & &B_2= B_5 = d B_fB_{f'}, \\
        &B_3 = B_4 = d B_{f'}^2,
        & &B_6 = d B_{f'},\quad
        B_7=B_8^2= B_f^2,
    \end{aligned}\right.\nonumber
\end{equation}
and
\begin{equation}
    \left\{\begin{aligned}
        &L_1 = dL_7 = 2d B_f L_f, &
        &L_2 = L_5 = 2d B_f L_{f'} + 2dB_{f'}L_f, \\
        &L_3 = L_4 = 2dB_{f'}L_{f'},&
        &L_6=d L_{f'},\quad
        L_8 = L_f.
    \end{aligned}\right.\nonumber
\end{equation}
Here, we only show the calculation of $L_1$ and $B_1$, the other results can be derived similarly.
For any parameterized function $F_1(\bx;\theta)\in \mathcal{F}_1=\{|\bp|^2:(u,\bp)\in\mathcal{P}\}$, it follows directly from Lemma \ref{bound and Lipschitz of f} that
\begin{equation*}
    F_1(\bx;\theta) = |\bp|^2 
    = \sum_{i=2}^{d+1} ( f_i^{\mathbf{mod}})^2 
    \le dB_f^2.
\end{equation*}
Note that we also have
\begin{equation*}
\begin{aligned}
    | F_1(\bx;\theta) -  F_1(\bx;\tilde{\theta}) |
    =
    | |\bp(\bx;\theta)|^2 -  |\bp(\bx;\tilde{\theta})|^2 |
    =
     | \sum_{i=1}^d (f_i^{\mathbf{mod}})^2 
     -  \sum_{i=1}^d  (\tilde{f}_i^{\mathbf{mod}})^2 |,
     \end{aligned}
\end{equation*}
thus one can derive from Lemma \ref{bound and Lipschitz of f} that
\begin{equation*}
\begin{aligned}
    | F_1(\bx;\theta) -  F_1(\bx;\tilde{\theta}) |
    \le &
    \sum_{i=2}^{d+1}  | (f_i^{\mathbf{mod}}+ \tilde{f}_i^{\mathbf{mod}})
    \cdot 
    (f_i^{\mathbf{mod}}- \tilde{f}_i^{\mathbf{mod}}) |
    \le 
    2d B_f
    L_f \big\|\theta-\tilde{\theta}\big\|_2.
     \end{aligned}
\end{equation*}
The evaluation of $L_1$ and $B_1$ is obtained.

\end{proof}

\subsection{Proof of Theorem \ref{thm: Statistical Error}}
Now combining the above results, we finally obtain the estimate of statistical error $\mathbb{E}_{\{\bX_i\}_{i=1}^N}\sup_{u\in\mathcal{P}} \pm [\mathcal{L}(u)- \widehat{\mathcal{L}}(u)]$  in Theorem \ref{thm: Statistical Error}. In fact,
    directly from Lemma \ref{lemma: bound in terms of eps-cover} we have
    \begin{equation}\nonumber
        \begin{aligned}
            \mathfrak{R}_N(\mathcal{F}_i)
            &\leq 
            \inf_{0\le\delta\le B_i/2}\left(4\delta+
            \frac{12\sqrt{\mathfrak{n}_{\mathcal{D}}}}{\sqrt{N}}\int_{\delta}^{B_i/2}\sqrt{
            \log\left(\frac{2B_{\theta}L_i\sqrt{\mathfrak{n}_{\mathcal{D}}}}{\epsilon}\right)}d\epsilon\right)\\
            &\leq 
            \inf_{0\le\delta\le B_i/2}\left(4\delta+\frac{6\sqrt{\mathfrak{n}_{\mathcal{D}}}B_i}{\sqrt{N}}\sqrt{\log\left(\frac{2L_iB_{\theta}\sqrt{\mathfrak{n}_{\mathcal{D}}}}{\delta}\right)}\right).
        \end{aligned}
    \end{equation}
    Choosing $\delta=1/\sqrt{N}\le B_i/2$ and applying Lemma \ref{lemma: bound and L-constant}, it leads to
    \begin{equation}
        \begin{aligned}
\mathfrak{R}_N\left(\mathcal{F}_i\right) 
& \leq 
\frac{4}{\sqrt{N}}
+
\frac{6 \sqrt{\mathfrak{n}_{\mathcal{D}}} B_i}{\sqrt{N}} 
\sqrt{\log \left(2 L_i B_\theta \sqrt{\mathfrak{n}_{\mathcal{D}}} \sqrt{N}\right)} \\
& \leq 
C \frac{d \sqrt{\mathfrak{n}_{\mathcal{D}}}
\left(\prod_{i=1}^{\mathcal{D}-1} n_i\right)^2 B_\theta^{2 \mathcal{D}}}{\sqrt{N}} \sqrt{\log \left(8 d \mathfrak{n}_{\mathcal{D}}(\mathcal{D}+2) B_\theta^{3 \mathcal{D}-1}\left(\prod_{i=1}^{\mathcal{D}-1} n_i\right)^3 \sqrt{N}\right)} \\
& \leq C \frac{d \sqrt{\mathcal{D}} \mathfrak{n}_{\mathcal{D}}^{2 \mathcal{D}} B_\theta^{2 \mathcal{D}}}{\sqrt{N}} \sqrt{\log \left(d \mathcal{D} \mathfrak{n}_{\mathcal{D}} B_\theta N\right)}
\end{aligned}
    \end{equation}
    Then it follows from Lemma \ref{lemma: function class Fk} and \eqref{split of statistical error} that
    \begin{equation}
        \mathbb{E}_{\left\{\bX_i\right\}_{i=1}^N} \sup _{u \in \mathcal{P}} \pm[\mathcal{L}(u)-\widehat{\mathcal{L}}(u)] 
        \leq 
        C(\operatorname{coe}) \frac{d \sqrt{\mathcal{D}} \mathfrak{n}_{\mathcal{D}}^{2 \mathcal{D}} B_\theta^{2 \mathcal{D}}}{\sqrt{N}} \sqrt{\log \left(d \mathcal{D} \mathfrak{n}_{\mathcal{D}} B_\theta N\right)}.
    \end{equation}

\section{Conclusions and Extensions}\label{sec: conclusion}

In this paper, we analyze the convergence rate for the mixed residual method for second-order elliptic equations with Dirichlet, Neumann, and Robin boundary conditions respectively. Additionally, we give some estimates for the settings of depth and width of neural networks such that a given convergence rate can be achieved in terms of training samples. To be clear, the convergence rate with respect to $n$ we deduced is $\mathcal{O}(n^{-\frac{1}{dlogd}})$.

Based on the comparison in Table \ref{tab}, it can be concluded that MIM has an advantage over DRM and DGMW for the Dirichlet case due to its ability to enforce the boundary condition. However, for the Neumann and Robin cases, MIM shows similar results to the other methods. While MIM has shown better numerical approximation in most of the experiments conducted, it does not seem to be observable in our analysis. This could be attributed to the lack of analysis of the optimization error, which contains information about the iteration times. Additionally, it should be noted that MIM requires more regularity for the exact solution, as the strong form of the loss function in the least-squares sense has been used.


\section{Acknowledgements}
We would like to thank Professor Jingrun Chen (USTC) for helpful guidance.

The work of Kai Gu and Peng Fang is supported by Undergraduate Training Program for Innovation and Entrepreneurship, Soochow University Project 202110285019Z. The work of Zhiwei Sun is supported by China Scholarship Council via grant 202006920083.
The work of Rui Du is supproted by NSFC grants 12271360 and 11501399.

\bibliographystyle{unsrt}
\bibliography{reference}

\end{document}